\documentclass[bj,preprint]{imsart}
\setattribute{journal}{name}{}
\usepackage{latexsym,epsfig,amssymb,amsmath,amsthm,url,bbm,enumerate,float}
\usepackage[usenames,dvipsnames]{color}
\usepackage{graphicx,hyperref}
\RequirePackage{natbib}
\allowdisplaybreaks \setlength{\textwidth}{6.5in}
\setlength{\textheight}{8.5in} \setlength{\topmargin}{0.25in}
\setlength{\headheight}{0in} \setlength{\oddsidemargin}{0in}
\setlength{\evensidemargin}{0in} \flushbottom 
\numberwithin{equation}{section}
\setlength{\marginparwidth}{0.8in}

\newtheorem{Theorem}{Theorem}[section]
\newtheorem{Lemma}[Theorem]{Lemma}
\newtheorem{Proposition}[Theorem]{Proposition}

\newtheorem{Assumption}[Theorem]{Assumption}
\newtheorem{Definition}[Theorem]{Definition}

\theoremstyle{definition}
\newtheorem{Remark}[Theorem]{Remark}
\theoremstyle{definition}

\def\cinP{\stackrel{P}{\to}}

\DeclareMathOperator{\sgn}{sgn}

\def\E{\mathbb{E}}
\def\bM{\mathbb{M}}

\def\P{\mathbb{P}}
\def\R{\mathbb{R}}

\def\C{\mathbb{C}}

\def\D{\mathbb{D}}

\definecolor{darkred}{RGB}{139,0,0}
\definecolor{darkgreen}{RGB}{0,139,0}


\begin{document}

\begin{frontmatter}

\title{{Detecting tail behavior: \\ mean excess plots with confidence bounds}}
\runtitle{Mean excess plots}
\begin{aug}
  \author{\fnms{Bikramjit}  \snm{Das}\ead[label=e1]{bikram@sutd.edu.sg}}
 \and
  \author{\fnms{Souvik}  \snm{Ghosh}\ead[label=e2]{sghosh@linkedin.com}}

 \thankstext{T1}{Bikramjit Das gratefully acknowledges Support from SRG ESD 047 and MIT-SUTD IDC grant IDG31300110.}


  \runauthor{B. Das \and S. Ghosh}

  \affiliation{SUTD\thanksmark{m1} \and LinkedIn\thanksmark{m2}}

  \address{Singapore University of Technology and Design\\20 Dover Drive, Singapore 138682 \\
           \printead{e1}}

  \address{LinkedIn Corporation\\2029 Stierlin Court, Mountain View, CA 94043, USA\\
          \printead{e2}}

\end{aug}

\begin{abstract}
In many practical situations exploratory plots are helpful in understanding tail behavior of sample data. The Mean Excess plot is often applied in practice to understand the right tail behavior of a data set. It is known that if the underlying distribution of a data sample is in the domain of attraction of a Fr\'echet, Gumbel or Weibull distributions then the ME plot of the data tend to a straight line in an appropriate sense, with positive, zero or negative slopes respectively. In this paper we construct confidence intervals around the ME plots which assist us in ascertaining which particular maximum domain of attraction the data set comes from. We recall weak limit results for the Fr\'echet domain of attraction, already obtained in \cite{das:ghosh:2013} and derive weak limits for the Gumbel and Weibull domains in order to construct confidence bounds. We test our methods on both simulated and real data sets. 
\end{abstract}

\begin{keyword}[class=AMS]
\kwd[Primary ]{62G32}
\kwd{62-09}
\kwd{60G70}
\kwd[; secondary ]{62G10}
\kwd{62G15}
\kwd{60F05}
\end{keyword}

\begin{keyword}
\kwd{extreme values}
\kwd{regular variation}
\kwd{random set}
\kwd{ME plot}
\kwd{asymptotic theory}
\kwd{confidence bounds}
\end{keyword}

\end{frontmatter}

\section{Introduction}\label{sec:intro}

  This article concerns the use of mean excess plots, a popular exploratory tool used to understand the tail behavior of a univariate data set.  Given a sample of data points, one of the first things a sensible data analyst does is to compute a summary statistics. Such a summary statistic might involve calculating measures of central tendencies (mean, median, mode) and those of dispersion (standard deviation, range, etc), plotting a histogram, an empirical cumulative distribution function and so on and so forth. A more curious analyst would ask the question, does it even make sense to calculate the sample mean or standard deviation; would they represent their counterparts in the original population?  What if the probability distribution of the population from which the data is sampled does not even have a first or second moment. This is a question that would or perhaps should particularly come to the mind of analysts modeling risk or other extreme events. In a world where data is being used to make serious economic, financial or environmental policy decisions, understanding extreme risks, which relate to the tail behavior of data sets have become increasingly important. This can be easily observed in the world of finance and insurance \citep{das:embrechts:fasen:2013,Donnelly:Embrechts,mcneil:frey:embrechts:2005}, telecommunications \citep{maulik:resnick:rootzen:2002}, environmental statistics \citep{davison:smith:1990} and many more areas.

    The  mean excess(ME) plot is a graphical tool that is widely used  to understand the tail behavior of a sample; see \cite{embrechts:kluppelberg:mikosch:1997,davison:smith:1990}. A Mean Excess plot, if the mean exists, assists in distinguishing light-tailed data sets from heavy-tailed ones. The inference is based on a visual examination of the slope of a fitted line through the ME plot (to be described in the next section) being zero, less than zero or greater than zero. Clearly, a confidence set around the fitted line would make inference in these cases more meaningful; hence this is the aim of the paper.
    
\subsection{The ME plot}\label{subsec:meplot}

The ME plot, as described in the introduction, is a popular tool in extreme value analysis. It is a simple graphical test to check whether data conform to a generalised Pareto distribution (GPD) . The class of GPD arise naturally in extreme value analysis as limit distributions while using the peaks-over-threshold (POT) method \citep{beirlant:goegebeur:teugels:segers:2004, davison:smith:1990}. The cumulative distribution function of a GPD is:

\begin{equation}\label{eq:GPD}
	G_{ \xi ,\beta }(x)= \left\{   \begin{array}{ll}  1-(1+  \xi x/ \beta  )^{ -1/\xi }  &\mbox{ if }\xi \neq 0, \\
        1-\exp(-x/\beta ) & \mbox{ if } \xi =0, \end{array}  \right.
\end{equation}
where $ \beta >0$, and $ x\ge 0$ when $ \xi \ge 0$ and $ 0\le x\le -\beta/\xi $ if $ \xi <0$. Parameters $ \xi $ and $ \beta $ are referred to as the \emph{shape} and the \emph{scale} parameter respectively. In extreme value analysis, we are interested in the shape parameter $\xi$ which tells us whether the data is heavy-tailed ($\xi>0$) or light-tailed ($\xi\le0$) or even more specifically if the underlying distribution has  finite right end-point ($\xi<0$).
The case $\xi>0$ and $\beta=1$ corresponds to the classical Pareto law with tail exponent $1/\xi$.

The ME plot is an empirical graphical plot of the \emph{ME function} of a random variable $X\sim F$ which is defined as:

\begin{equation}\label{eq:meanexcess}
	M(u):= \E \big[  X-u|X>u \big],
\end{equation}
provided $\E X_+<\infty$. The ME function is also  known as the \emph{mean residual life function} for non-negative random variables and is extensively used in reliability theory and survival analysis for data modelling since $M(u)$ completely determines $F$ if  $\E (X) < \infty$ \citep{hall:wellner:1981}. Suppose we have an iid sample $X_1, \ldots, X_n \sim F$. A natural estimate of  $M(u)$  is the empirical ME function
$ \hat M(u)$ defined as
\begin{equation}\label{eq:empiricalmeanexcess}
	\hat M(u)= \frac{ \sum_{ i=1}^{ n}(X_{ i}-u)I_{ [ X_{ i}>u]}}{ \sum_{ i=1}^{ n}I_{ [ X_{ i}>u]}}, \ \ \ \ u\ge 0.
\end{equation}
Denoting $X_{(1)}\ge X_{(1)} \ge \ldots \ge X_{(n)}$ to be the order statistics from a sample $X_1,\ldots, X_n$, the ME plot is a plot of the points
 $$ \mathcal{ME}_n := \{   (X_{ (k)},\hat M(X_{ (k)})):1< k\le n  \}.$$

We study the asymptotic properties of $\mathcal{ME}_n$ for different classes of distributions $F$. It is well-known that for a random variable $ X\sim G_{ \xi ,\beta }$ , we have $E(X)<\infty$ if and only if $ \xi <1$ and in this case, the ME function of $X$ is linear in $u$:
\begin{equation}\label{eq:megdp}
	M(u)= \frac{  \beta}{1-\xi  }+ \frac{ \xi}{1-\xi}u,
\end{equation}
where $ 0\le u< \infty $ if $ 0\le \xi<1$ and $ 0\le u\le -\beta /\xi$ if $ \xi<0$.  

Interestingly, the linearity of the ME function characterises the GPD class \citep{mcneil:frey:embrechts:2005,embrechts:kluppelberg:mikosch:1997}. From the discussions in \cite{ghosh:resnick:2009}
we have learnt that the empirical ME plot $\mathcal{ME}_n$ above a high order statistics $X_{(k)}$ when appropriately normalised converge in probability to a straight line if $F$ is in the maximal domain of attraction of any generalized extreme value distribution with finite mean (Gumbel, Weibull or Fr\'echet distribution).
Distributional limits for $\mathcal{ME}_n$ in a space of closed sets and confidence intervals around $\mathcal{ME}_n$ can also be computed in many cases and such findings have been discussed in case the underlying data is heavy-tailed (Fr\'echet domain of attraction) in \cite{das:ghosh:2013}.

In Section \ref{subsec:misc} we collect notations and ideas to be used throughout the paper. See \cite{das:ghosh:2013} for further elaboration of the concepts of convergence of closed sets (random) in this context. In the main part of the paper we start by consolidating a few results which are already known on the distributional property of ME plot, especially in the heavy-tailed case \citep{das:ghosh:2013}; this is covered in Section \ref{subsec:MEplot:known}. The rest of Section \ref{sec:MEplot} deals with limit results for ME plots in the case where the underlying distribution is either in the Gumbel maximum domain of attraction or in a Weibull maximum domain of attraction. The limit theorems proved is Section \ref{sec:MEplot} is used to create confidence bounds around the ME plots in Section \ref{sec:CI}. In Section \ref{sec:data} we use the tools developed in Section \ref{sec:MEplot} and \ref{sec:CI} to test it out both on simulated data as well as real data sets.

\subsection{Miscellany}\label{subsec:misc}
First we recall the idea of maximum domain of attraction of an extreme value distribution. The class of extreme value distributions is parametrized by a shape parameter $\xi\in \R$ and we define the distribution function $G_{\xi}$ to be
 \[G_{\xi}(x) = \exp(-(1+\xi x)^{-1/\xi}), \quad 1+\xi x >0,\]
for all real $\xi$ and for $\xi=0$, the right hand side is interpreted as $\exp(-e^{-x})$. 
\begin{Definition}\label{def:extremevaluedist}
A distribution function $F$ (or the underlying random variable $X\sim F$) is in the \emph{maximum domain of attraction} of an extreme value distribution $G_{\xi}$ if there exists sequences $c_n>0$ and $d_n\in \R$ such that 
\[ F^n(c_nx + d_n) \to G_{\xi}(x) \quad \text{for all } x\in \Re.\]
\end{Definition}
The distributions for the cases $\xi>0$, $\xi=0$ and $\xi<0$ are respectively called the Fr\'{e}chet distribution, the Gumbel distribution and the Weibull distribution. As mentioned in the introduction, if $F\in D(G_{\xi})$ for some extreme value distribution  with $\xi<1$, implying that $F$ has finite mean, then the ME function of  $F$ is linear with an appropriate slope determined by the parameter $\xi$; see \cite{ghosh:resnick:2009}.

 Throughout this paper we will take $ k:=k_{ n}$ to be a sequence
increasing to infinity such that  $n/k_n \to \infty$ or $ k_{ n}/n\to 0$.  For a distribution
function $F(x)$ we write $\bar F(x):=1-F(x) $ for the tail and the
quantile function is
$$b(u):=F^\leftarrow (1-\frac 1u):=\inf\left\{s:F(s)\geq
1-\frac 1u\right\}=\Bigl(\frac{1}{1-F}\Bigr)^\leftarrow (u).$$
  A function
$U:(0,\infty)\to \mathbb{R}_+$ is regularly varying with index
$\rho \in \mathbb{R}$, written $U\in RV_\rho$, if
$$\lim_{t\to\infty} \frac{U(tx)}{U(t)}=x^\rho,\quad x>0.$$
 If $X\sim F$ we will often have the right-hand tail of $F$ to be regularly varying, that is, $\bar{F}\in RV_{-\alpha}$ for $\alpha\ge 0$, and by abuse of notation we might say $X\in RV_{-\alpha}$. Regular variation is discussed in several books
such as \cite{resnickbook:2007, resnickbook:2008, seneta:1976,
geluk:dehaan:1987, dehaan:1970, dehaan:ferreira:2006, bingham:goldie:teugels:1987}.

We use $\bM_{+}(0,\infty]$ to denote  the space of nonnegative Radon measures $ \mu$ on $
(0,\infty]$ metrized by the vague metric.  Point measures are written as a function of their points $\{x_i,
i=1,\dots,n\}$ by $\sum_{i=1}^n \delta_{x_i}.$ See, for example,
\cite[Chapter 3]{resnickbook:2008}.

 We will use the following notations to denote different classes of functions: For $  0\le a<b\le \infty$,
\begin{enumerate}
	\item $ \C[a,b)$: Continuous functions on $ [a,b)$.
        \item  $ \D[a,b)$: Right-continuous functions with finite left limits defined on $ [a,b)$.
        \item $ \D_{ l}[a,b)$: Left-continuous functions with finite right limits defined on $ [a,b)$.
\end{enumerate}
It is known that $\D[0,1]$ is complete and separable under a metric $d_0(\cdot)$ which is equivalent to the Skorohod metric $d_S(\cdot)$ \citep[p.128]{billingsley:1968},
 but not under the uniform metric $ \|\cdot \|$. As we will see,  the limit processes that appear in our analysis below are always continuous. We can
 check that if $x$ is continuous (in fact uniformly continuous) in $[0,1]$, for $x_n \in \D[0,1]$, $||x_n-x|| \to 0$ is equivalent to $d_S(x_n,x) \to 0$
 and hence equivalent to $d_0(x_n,x) \to 0$ as $n \to \infty$ \citep[p.124]{billingsley:1968}. So we use convergence in uniform metric, for our convenience
 henceforth. For spaces of the form $ \D[a,b)$ or $ \D_{ l}[a,b)$  we will consider the topology of local uniform convergence. In some cases we will also consider
 product spaces of functions and then the topology will be the product topology. For example, $ \D^{ 2}_{ l}[1,\infty)$ will denote the class of 2-dimensional functions on $ [1,\infty)$ which are left-continuous with right limit.  The classes of functions defined on the sets $ [a,b]$ or $ (a,b]$ will have the obvious notation. For further details on notions of convergence and topology for convergences of plots see \cite{das:ghosh:2013}.

\section{Limit results for the ME Plots}\label{sec:MEplot}
In this section we find distributional limits for ME plots when it exists. We continue the study of ME plots from \cite{ghosh:resnick:2009} and \cite{das:ghosh:2013} and give a complete picture of limit results for ME plots. We cite some of the results from the afore-mentioned papers for completeness. 

The basic assumption is that we have an iid sample of data points from some unknown distribution $F$ which belongs to the maximum domain of attraction of one of the three extreme value distributions. The assumption of independence in the sample can be relaxed a bit under certain conditions which we do not explore here.

Suppose $ X_{ 1},\ldots,X_{ n}$ is an i.i.d. sample from a distribution $ F$. We will work under this assumption for the entire section. The properties of the empirical ME function
 $ \hat M(u)$  as an estimator of $ M(u)$ has been studied by  \cite{yang:1978}.  It was shown there that $ \hat M(u)$ is uniformly strongly consistent for
$ M(u)$: for any $ 0<b<\infty$
\[
 	P\left[ \lim_{ n \rightarrow \infty } \sup_{ 0\le u\le b} \left| \hat M(u)-M(u) \right|= 0 \right]=1.
\]
A weak (distributional) limit for $ \hat M(u)$ was also shown in \cite{yang:1978}: for any $ 0<b<1$
\[
 	   \sqrt{n} \left( \hat M\big(F^{ \leftarrow }(t)\big) -M\big(F^{ \leftarrow }(t)\big) \right) \Rightarrow U(t),
\]
where $U(t) $ is a Gaussian process on $ [0,b]$ with covariance function
\[
 	\Gamma (s,t)= \frac{ (1-t)\sigma ^{ 2}(t)-t\theta ^{ 2}(t)}{(1-s)(1-t)^{ 2} }    \ \ \ \ \mbox{ for all }0\le s\le t\le b
\]
with
\[
 	\sigma ^{ 2}(t)=var \left( XI_{ [t < F(X)\le 1]} \right)  \ \  \mbox{ and }  \ \ \theta(t)=E \left( XI_{ [t < F(X)\le 1]} \right) .
\]
Using Lemma 2.4 in \cite{das:ghosh:2013} it is easy to see   that  the ME plots also exhibit the same features. Our interest in ME plots is for detecting right tail behavior of data samples (an equivalent case can be easily made for left tail behavior). Hence the linearity we seek in the ME plot will be for high thresholds. Necessarily, the ME plots we will discuss in the various cases will be transformations of the ME plot over an appropriate quantile, i.e., $\{(X_{(i)},\hat{M}(X_{(i)})): 1<i\le k\}$ for $k:=k(n)<n$ where $\hat{M}$ is as defined in \eqref{eq:empiricalmeanexcess}.

\subsection{ME plot in the Fr\'echet case}\label{subsec:MEplot:known}
First we look at the case where the underlying distribution $F$ is heavy-tailed, in the sense that $F\in D(G_{\xi})$ with $\xi>0$ or in other words, $\bar{F}\in RV_{-1/\xi}$. We define the ME plot as:
\begin{align}\label{def:ME_frechet}
\mathcal{M}_n:= \frac{1}{X_{(k)}}\left\{\left(X_{(i)},\hat{M}(X_{(i)})\right): i=2, \ldots,k\right\}.
\end{align}
From \cite[Theorem 3.2]{ghosh:resnick:2009}, we know that for $0<\xi<1$, as $ n,k,n/k\to \infty $,
\[\mathcal{M}_n \cinP \mathcal{M}:= \left\{ \left(t,\frac{\xi}{1-\xi}t\right): t>1\right\}\]

The distributional behavior of $\mathcal{M}_n$ depends on whether $F$ has finite second moment or not and has been discussed under certain regularity conditions in \cite{das:ghosh:2013}. We note them down below.

\noindent
{\bf{Case 1 ($0<\xi<1/2$):}} For any $ 0<\epsilon <1 $ as $ n,k,n/k\to \infty $,
\begin{align}
 	&\mathcal{MN}_{ n}:=\left\{\Bigg( \Bigg(\frac{ i}{k }\Bigg)^{ -\xi}, \frac{ \xi}{1-\xi } \Bigg( \frac{ i}{k } \Bigg)^{ -\xi} \Bigg)  \right. \nonumber\\
	& \ \ \ \ \ \ \ \ \ \ \ \ +\left. \sqrt{k}\left( \frac{ X_{ (i)}}{X_{ (k)}  }  -\Big(\frac{ i}{k }\Big)^{ -\xi}, \frac{\hat M(X_{ (i)})}{X_{ (k)}} - \frac{ \xi}{1-\xi } \Big( \frac{ i}{k } \Big)^{ -\xi}  \right) :i=\lceil \epsilon k\rceil ,\ldots,k\right\} \nonumber\\
	& \Rightarrow \mathcal{MN}:= \left\{\left( t^{ -\xi}+\xi t^{ -(1+\xi)}B(t),\frac{ \xi}{1-\xi }t^{ -\xi}+  \xi t^{ -1} \int_{ 0}^{ t} y^{ -(1+\xi)}B(y)dy \right) , \epsilon \le t\le 1\right\} \ \ \ \ \mbox{ in } \mathcal{F},\label{eq:frechetwithvar}
\end{align}
where $ B(t)$ is the standard Brownian bridge on $ [0,1]$. This is the case where $F$ has a finite second moment and hence the distributional limit has a Brownian component.\\
\noindent
{\bf{Case 2 ($ 1/2<\xi<1$):}} For any $ 0<\epsilon <1 $, as $ n,k,n/k\to \infty $,
 \begin{align}
 	\mathcal{MN}_{ n}:= &\Bigg\{\Bigg( \Bigg(\frac{ i}{k }\Bigg)^{ -\xi}, \frac{ \xi}{1-\xi } \Bigg( \frac{ i}{k } \Bigg)^{ -\xi} \Bigg) \nonumber\\ &+ \left(\sqrt{k}\left( \frac{ X_{ (i)}}{X_{ (k)}  }  -\Big(\frac{ i}{k }\Big)^{ -\xi}\right), \frac{ kb(n/k)}{b(n) } \left(\frac{\hat M(X_{ (i)})}{X_{ (k)}} - \frac{ \xi}{1-\xi } \Big( \frac{ i}{k } \Big)^{ -\xi} \right) \right) :i=\lceil \epsilon k \rceil,\ldots,k\Bigg\} \nonumber\\
	 \Rightarrow & \mathcal{MN}:= \left\{\left( t^{ -\xi}+\xi t^{ -(1+\xi)}B(t),\frac{ \xi}{1-\xi }t^{ -\xi}+  t^{ -1} S_{ 1/\xi } \right) , \epsilon \le t\le 1\right\} \ \ \ \ \mbox{ in } \mathcal{F}, \label{eq:frechetwithoutvar}
\end{align}
where $ S_{ 1/\xi }$ is a stable random variable  with characteristic function 
\begin{equation}\label{eq:charstable}
 	E\big[  e^{ itS_{ 1/\xi}} \big]   =
	\exp \Bigg\{ -\frac{ 1}{1-\xi }\Gamma \Big( 2- \frac{ 1}{\xi } \Big)\cos  \frac{ \pi}{2\xi }| t |^{ 1/\xi}\Big[  1-i\sgn(t) \tan \frac{ \pi}{2\xi } \Big]   \Bigg\},
\end{equation}
and is independent of the standard Brownian Bridge $ B(t)$ on $[0,1]$.
	This is the case where $F$ has a finite mean but does not have a finite second moment, hence we also observe a non-Gaussian stable weak limit. The results in \eqref{eq:frechetwithvar} and \eqref{eq:frechetwithoutvar} are described in Theorems 4.3 and 4.6 in details in \cite{das:ghosh:2013}.

\subsection{ME plot in the Gumbel case}\label{subsec:MEplot:gumbel}

The behaviour (in probability) of ME plot when $F$ is in the maximum domain of attraction of a Gumbel distribution has been discussed in \cite{ghosh:resnick:2009}. We state the following result to recall notations to be used: this follows from Theorems 3.3.26 and 3.4.13(b) in \cite{embrechts:kluppelberg:mikosch:1997}; see  \citep[Theorem 3.9]{ghosh:resnick:2009} or \citep[Proposition 1.4]{resnickbook:2008} for further details.

\begin{Proposition}\label{prop:char:zero:xi}
The following are equivalent for a distribution function $ F$ with right end point $ x_{ F}\le \infty$:
 \begin{enumerate}
   \item $ F$ is in the maximum domain of attraction of the Gumbel distribution, i.e.,
\begin{equation}\label{eq:Gumbel:defn}
 	F^{ n}\big( c( n )x +d( n )\big)\to \exp \big\{-e^{ -x}  \big\}   \ \ \ \ \mbox{ for all }  x\in \mathbb{R},
\end{equation}
for some sequence $ c(n) $ and $ d(n) $. 

        \item  There exists  $ z<x_{ F}$ such that $ F$ has a representation
        \begin{equation}\label{eq:char:F:gumbel}
 	\bar F(x)=\kappa (x)\exp \Big\{ -\int_{ z}^{ x} \frac{ 1}{a(t) }dt \Big\} , \ \ \ \ \mbox{ for all } z<x<x_{ F},
\end{equation}
where $ \kappa (x)$ is a  measurable function satisfying $ \kappa (x)\to \kappa >0$,  $ x\rightarrow x_{ F}$, and $ a(x)$ is a positive, absolutely continuous function with density $ a^{ \prime}(x)\to 0$ as $ x\to x_{ F}$.

\end{enumerate}
\end{Proposition}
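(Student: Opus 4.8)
The plan is to route the equivalence through the auxiliary-function (von Mises) characterisation of the Gumbel domain together with the representation theorem for $\Pi$-varying functions. Throughout I write $\bar F = 1-F$ and use the tail-quantile function $b(u) = (1/(1-F))^{\leftarrow}(u)$ introduced above.

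For (1) $\Rightarrow$ (2): First I would rewrite the convergence to the Gumbel law in its tail form $n\bar F(c(n)x + d(n)) \to e^{-x}$ and translate it into a statement about $b$, namely that $b \in \Pi$: there is a positive function $a$ with $\lim_{t \to \infty}(b(tx) - b(t))/a(t) = \log x$ for all $x > 0$. This is the classical correspondence between membership in the Gumbel domain and $\Pi$-variation of the tail-quantile function. I would then invoke the de Haan representation theorem for $\Pi$, which, after inverting back to the tail, delivers a formula of the shape
\[
\bar F(x) = c(x)\exp\left\{-\int_z^x \frac{g(t)}{a(t)}\,dt\right\},
\]
with $c(x) \to c > 0$ and $g(x) \to 1$. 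Replacing $a$ by $a/g$ (asymptotically equivalent to $a$) turns $g/a$ into a clean $1/a$ and leaves $c(\cdot)$ converging to a positive limit, yielding the form in (2) with $\kappa = c$. The remaining cosmetic requirement is that $a$ be absolutely continuous with $a'(x) \to 0$; the canonical device is to replace $a$ by the mean-excess-type auxiliary function $\tilde a(x) = \int_x^{x_F}\bar F(t)\,dt\,/\,\bar F(x)$, which is asymptotically equivalent to any $\Pi$-auxiliary function, is absolutely continuous off the jumps of $F$, and satisfies $\tilde a'(x) \to 0$ precisely in the Gumbel domain.

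For (2) $\Rightarrow$ (1): Starting from the representation (2), I would verify the tail-ratio (von Mises) condition
\[
\lim_{u \to x_F} \frac{\bar F(u + t\,a(u))}{\bar F(u)} = e^{-t}, \qquad t \in \mathbb{R}.
\]
Using (2), the factor $\kappa(u + t a(u))/\kappa(u) \to 1$ because $\kappa \to \kappa > 0$, so the whole problem reduces to showing $\int_u^{u + t a(u)}(1/a(s))\,ds \to t$. Here the hypothesis $a'(x) \to 0$ is the crucial input: by a mean-value estimate $a(s) = a(u) + a'(\zeta)(s-u)$ with $|s-u| \le |t|\,a(u)$, it forces $a(s)/a(u) \to 1$ uniformly for $s$ between $u$ and $u + t a(u)$, and the substitution $s = u + v\,a(u)$ then sends the integral to $t$. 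Once the von Mises condition is in hand, taking $d(n) = b(n)$ and $c(n) = a(b(n))$ gives $n\bar F(c(n)x + d(n)) \to e^{-x}$, which is (1).

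The main obstacle is the representation step in (1) $\Rightarrow$ (2): extracting the integral form from $\Pi$-variation and, above all, arranging the auxiliary function to be simultaneously absolutely continuous with vanishing derivative while staying asymptotically equivalent to the original $a$. Passing to the mean-excess auxiliary function $\tilde a$ is the natural remedy, but checking $\tilde a'(x) \to 0$ and confirming that the leftover multiplicative factors collect into a convergent $\kappa(\cdot)$ both rely on the uniform control that $\Pi$-variation supplies; by contrast, the reverse implication is comparatively routine once $a' \to 0$ is available.
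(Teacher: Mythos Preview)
The paper does not prove this proposition at all: it is stated as a known characterisation and attributed to Theorems 3.3.26 and 3.4.13(b) of Embrechts--Kl\"uppelberg--Mikosch (1997), with further pointers to Ghosh--Resnick (2009, Theorem 3.9) and Resnick (2008, Proposition 1.4). So there is no ``paper's own proof'' to compare against; the authors simply quote the result and immediately use the consequence that the mean-excess function $a(x)=\int_x^{x_F}\bar F(t)\,dt/\bar F(x)$ is an admissible auxiliary function with $a'(x)\to 0$.

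Your sketch is essentially the classical route contained in those references: (1)$\Rightarrow$(2) via $\Pi$-variation of the tail-quantile function and the de Haan representation, then smoothing the auxiliary function by passing to the mean-excess choice; (2)$\Rightarrow$(1) via direct verification of the von Mises tail-ratio condition using $a'\to 0$. The argument is sound at the level of a sketch. The one place to be careful is the absolute continuity of $\tilde a(x)=\int_x^{x_F}\bar F(t)\,dt/\bar F(x)$ when $F$ has atoms: $\tilde a$ is absolutely continuous only on intervals where $\bar F$ is constant or continuous, so one needs the standard cleanup (e.g.\ replacing $\bar F$ by a continuous monotone function asymptotically equal to it, or working to the right of all atoms) before claiming $\tilde a'$ exists everywhere and tends to $0$. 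This is handled in the cited sources, but it is the genuine technical wrinkle you flagged.
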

We know from \citep[Proposition 1.1]{resnickbook:2008} that a choice of the norming sequence $ c(n) $ and $ d(n) $ in \eqref{eq:Gumbel:defn} is
\[
	d(n)=F^{ \leftarrow }(1-n^{ -1}) \ \ \ \  \mbox{ and }\ \ \ \ c(n)=a(d(n)).
\]
Theorem 3.3.26 in \cite{embrechts:kluppelberg:mikosch:1997} says that a choice of the auxiliary function $ a(x)$ in \eqref{eq:char:F:gumbel} is
\[
 	a(x)   =\int_{ x}^{ x_{ F}} \frac{ \bar F(t)}{ \bar F(x) } dt \ \ \ \ \mbox{ for all }x<x_{ F},
\]
and for this choice, the auxiliary function is  the ME function, i.e., $ a(x)=M(x)$. Furthermore, we also know that $ a^{ \prime }(x)\to 0$ as $ x\to x_{ F}$ and this  implies that $ M(u)/u\to 0$ as $ u\to x_{ F}$. Define the ME plot in this case as
\begin{align}\label{def:ME_gumbel}
\mathcal{M}_n:= \frac{1}{X_{\lceil k/e \rceil}-X_{(k)}}\left\{\left(X_{(i)}-X_{(k)},\hat{M}(X_{(i)})\right): i=2, \ldots,k\right\}.
\end{align}
From a minor modification of \cite[Theorem 3.10]{ghosh:resnick:2009}, we know that as $ n,k,n/k\to \infty $,
$\mathcal{M}_n \cinP \mathcal{M}:= \left\{ \left(t,1\right): t>0\right\}$.
Now we will additionally put one more condition in order to get a weak limit for ME plots in the Gumbel case which is stated as follows.

%
%
%
\begin{Assumption}\label{assmp:ME1:xi0}
The distribution function $ F$ satisfies the following:
\begin{equation}
 \sqrt{k} \left(\frac nk \bar{F} \left(c(n/k)y+d(n/k)\right) - e^{-y}\right)\to 0
\end{equation}
point-wise and in $ L_{1} $-norm in $ [0,\infty) $ as $n,k,n/k \to \infty$.
\end{Assumption}
Now we can state the distributional result for ME plots when $ F $ is in the maximum domain of attraction of the Gumbel distribution.
\begin{Theorem}\label{thm:weaklimit:MEplot:xi0}
Suppose $ X_{ 1},\ldots,X_{ n}$ are i.i.d. observations from a distribution $ F $ which is in the maximum domain of attraction of the Gumbel distribution and satisfies Assumption \ref{assmp:ME1:xi0}.  Then for any $ 0<\epsilon <1$, as $ n,k,n/k\to \infty$,
\begin{align*}
 	\mathcal{MN}_{ n}&:=\left\{  \left(-\ln \left(\frac ik \right), 1 \right) + \sqrt{k}\left( \frac{ X_{ (i)}-X_{(k)}}{X_{(\lceil k/e \rceil)}-X_{ (k)}  } + \ln \left(\frac ik \right), \frac{\hat M(X_{ (i)})}{X_{(\lceil k/e \rceil)}-X_{ (k)}  } - 1  \right) :i=\lceil \epsilon k\rceil ,\ldots,k\right\} \\
	& \Rightarrow \mathcal{MN}:= \left\{\left(-\ln (t)+ eB(e^{-1})\ln (t) +\frac{B(t)}t , 1 + e B(e^{-1}) + \frac{1}{t} \int\limits_0^t\frac{B(s)}{s}ds   \right) , \epsilon \le t\le 1\right\} \ \ \ \ \mbox{ in } \mathcal{F},
\end{align*}
 where $ B(t)$ is the standard Brownian bridge on $ [0,1]$.
\end{Theorem}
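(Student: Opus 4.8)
The plan is to follow the strategy used for the Fréchet case in \cite{das:ghosh:2013}, adapting it to the $\Pi$-variation that characterises the Gumbel domain. Write $d(n/k)=b(n/k)$ and $c(n/k)=a(d(n/k))$ as in Proposition~\ref{prop:char:zero:xi}, recalling $a(x)=M(x)$ and $a'(x)\to0$. First I would reduce everything to uniform order statistics: by the quantile transform $X_{(i)}\eqd b\big(1/V_{(i)}\big)$, with $V_{(1)}\le\cdots\le V_{(n)}$ the order statistics of an i.i.d.\ uniform sample, and by the R\'enyi representation the top block is, up to an asymptotically negligible factor, driven by partial sums of i.i.d.\ standard exponentials. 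With $t=i/k$, a strong (Hungarian) coupling then yields a standard Brownian motion $W$ with
\[ \sqrt{k}\Big(\tfrac{n}{k}\,V_{(\lceil kt\rceil)}-t\Big)\weak W(t),\qquad t\in[\epsilon,1]. \]
The decisive structural point is that centring at the $k$-th order statistic — subtracting the value at $t=1$ — turns $W$ into the standard Brownian bridge $B(t)=W(t)-tW(1)$, which is why $B$ and not $W$ appears in the statement.

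Next I would move to the $X$-scale and handle the first coordinate. Combining the $\Pi$-variation $\big(b(uy)-b(u)\big)/a(b(u))\to\ln y$ with a delta-method expansion of $-\ln(\cdot)$ gives
\[ \sqrt{k}\Big(\tfrac{X_{(\lceil kt\rceil)}-d(n/k)}{c(n/k)}+\ln t\Big)\weak -\tfrac{W(t)}{t}. \]
Here the \emph{pointwise} half of Assumption~\ref{assmp:ME1:xi0} is used precisely to guarantee that the deterministic error in replacing $\tfrac nk\bar F(c(n/k)y+d(n/k))$ by $e^{-y}$ is $o(k^{-1/2})$, so no bias survives the $\sqrt k$ scaling. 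The normaliser $X_{(\lceil k/e\rceil)}-X_{(k)}$ equals $c(n/k)$ to leading order but carries its own fluctuation at $t=1/e$; expanding the ratio $\big(X_{(\lceil kt\rceil)}-X_{(k)}\big)\big/\big(X_{(\lceil k/e\rceil)}-X_{(k)}\big)$ to first order in $k^{-1/2}$ and re-expressing in terms of $B$ then produces the two terms $\tfrac{B(t)}{t}$ and $eB(e^{-1})\ln t$ recorded in the statement (the overall sign is immaterial since $B$ and $-B$ have the same law).

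For the second coordinate I would use $\hat M(X_{(i)})=\tfrac{1}{i-1}\sum_{l<i}\big(X_{(l)}-X_{(i)}\big)$, so that after the same normalisation it becomes a Riemann sum converging to the averaging functional $t^{-1}\int_0^t(\cdot)\,ds$ applied to the first-coordinate process. Integrating the bridge contribution produces the term $t^{-1}\int_0^t B(s)s^{-1}\,ds$ of the statement, the random normaliser again contributes the constant $eB(e^{-1})$, and the deterministic parts collapse to the centring constant $1$. This step is exactly where the \emph{$L_1$} half of Assumption~\ref{assmp:ME1:xi0} is indispensable: since $\hat M$ averages the entire tail, pointwise bias control no longer suffices, and $L_1$-control is what forces the integrated bias to vanish at rate $o(k^{-1/2})$.

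Finally I would assemble the joint weak convergence of the two coordinates as processes on $[\epsilon,1]$ and pass to convergence of the random closed sets in $\F$ by invoking Lemma~2.4 of \cite{das:ghosh:2013}, which converts uniform convergence of the parametrised plot into convergence of the associated closed sets. The step I expect to be the main obstacle is control near the lower endpoint: the integrand $B(s)/s$ is singular at $s=0$ and the smallest retained order statistics are the least faithfully reproduced by the exponential/Brownian coupling, so proving tightness and justifying the interchange of the limit with the averaging integral uniformly over $t\in[\epsilon,1]$ — via the $L_1$ bound together with a maximal inequality for $B(s)/s$ near the origin — is the genuinely delicate part; the remainder is first-order expansion and the continuous-mapping/closed-set argument already in place for the Fréchet case.
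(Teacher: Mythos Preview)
Your proposal is correct and reaches the same limit, but the route differs from the paper's in two places. For the underlying weak limit of the tail process, you go through the quantile transform to uniform order statistics and invoke a Hungarian (KMT) strong approximation, then push through to the $X$-scale via the $\Pi$-variation expansion of $b(\cdot)$; the paper instead works directly with the tail empirical measure $\nu_n(\cdot)=k^{-1}\sum_i\varepsilon_{(X_i-d(n/k))/c(n/k)}(\cdot)$, establishes $W_n\Rightarrow W$ by verifying the conditions C1--C4 of Rootz\'en's (2009) theorem for tail empirical processes (this is isolated as a separate lemma), and then applies Vervaat's lemma and the functional delta method to obtain the order-statistic process --- no explicit $\Pi$-variation step or KMT coupling is needed. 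For the mean-excess coordinate, you write $\hat M(X_{(i)})$ as a Riemann sum over order statistics and pass to the integral limit; the paper instead represents $\hat M(X_{(\lceil kt\rceil)})/c(n/k)$ as an integral of $\nu_n(y,\infty]$ over $y$ and reads off the limit from the already-established joint convergence, exactly as in its Fr\'echet argument. Both approaches finish with the same closed-set lemma. The paper's route is shorter because Rootz\'en's theorem packages the tightness and the bias control in one stroke and keeps the argument parallel to the Fr\'echet case; your route is more elementary in that it avoids the abstract tail-empirical-process machinery, but it obliges you to control the $\Pi$-variation remainder uniformly --- precisely the delicate point you flagged near $t=0$, which the paper sidesteps by staying on the empirical-measure side.
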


\begin{proof}
The proof is along the same lines of the proof of Theorem 4.3 in \cite{das:ghosh:2013}. Denote the tail empirical measures by
\begin{align}
 \nu_n(\cdot)&:= \frac1k \sum\limits_{i=1}^n \varepsilon_{\frac{X_i-d(n/k)}{c(n/k)}}(\cdot) \mbox{ and,} \\
 \hat\nu_n(\cdot)&:= \frac1k \sum\limits_{i=1}^n \varepsilon_{\frac{X_i-X_{(k)}}{c(n/k)}}(\cdot)
\end{align}
and define for $k:=k(n)<n$ and $0<t\le 1$:
\[
 W_n(t):  = \sqrt{k}\big(\nu_n(-\ln t, \infty]  - t \big)
        = \sqrt{k}\left(\frac 1k \sum\limits_{i=1}^n \varepsilon_{\frac{X_i-d(n/k)}{c(n/k)}} (-\ln t, \infty] - t\right).\label{eq:Wn1:xiless0}
\]
We prove in Lemma \ref{lem:weakfornu_nGumbel} that $ W_{n}\Rightarrow W $ in $ D_{l}(0,1] $, where $ W $ is the standard Brownian motion in $[0,1]$. Applying Vervaat's lemma \citep[Proposition 3.3, p.59]{resnickbook:2007} to \eqref{eq:Wn1:xiless0} we get
\[
	 \sqrt{k} \left( \exp \left\{- \frac{ X_{(\lceil kt \rceil)}-d(n/k)}{c(n/k) } \right\} -t   ,   \nu_n(-\ln t, \infty]  - t  \right) \Rightarrow \left( -W(t),W(t) \right)  \ \ \ \  \mbox{ in } \mathbb{D}^{2}_{l}(0,1].
\]
Using the Functional Delta-method \citep[Theorem 3.9.4]{vandervaart:wellner:1996} we get
\begin{equation}\label{eq:deltamethodresult}
	\sqrt{k} \left(  \frac{ X_{(\lceil kt \rceil)}-d(n/k)}{c(n/k) } + \ln t   ,  \nu_n(y, \infty]  - e^{-y}   \right) \Rightarrow \left( \frac{ W(t)}{t},W(e^{-y}) \right) \ \ \ \ \mbox{ in }  \mathbb{D}_{l}(0,1]\times \D [1,\infty),
\end{equation}
and it is easy to check that
\begin{equation}
	\sqrt{k} \left(  \frac{ X_{(\lceil kt \rceil)}-X_{(k)}}{c(n/k) } + \ln t   , \hat \nu_n(y, \infty]  - e^{-y}   \right) \Rightarrow \left( \frac{ B(t)}{t},B(e^{-y}) \right) \ \ \ \ \mbox{ in }  \mathbb{D}_{l}(0,1]\times \D [1,\infty).
\end{equation}
 Then following arguments used in the proof of Theorem 4.3 in \cite{das:ghosh:2013} we get
\begin{align*}
	& \sqrt{k} \left(  \frac{ X_{(\lceil kt \rceil)}-d(n/k)}{c(n/k) } + \ln t   ,    \frac{ \hat M (X_{(\lceil kt \rceil)})}{ c(n/k) } -1\right)  \\
	& =  \sqrt{k} \left(  \frac{ X_{(\lceil kt \rceil)}-d(n/k)}{c(n/k) } + \ln t   ,    \frac{k}{(\lceil kt \rceil-1) c(n/k) }  \int^{\infty}_{\frac{ X_{(\lceil kt \rceil)}-d(n/k)}{c(n/k) }}\nu_{n} (y,\infty]dy-1\right) \\
	& \Rightarrow  \left( \frac{ B(t)}{t}, \frac{ 1}{t }\int_{-\ln t}^{\infty} B(e^{-y}) dy    \right)
	=  \left( \frac{ B(t)}{t}, \frac{ 1}{t }\int_{0}^{t} \frac{B(s)}{s} ds    \right).
\end{align*}
The proof the theorem is completed by invoking Lemma 2.4 in \cite{das:ghosh:2013}.
\end{proof}

%
%
%
%

\begin{Lemma}\label{lem:weakfornu_nGumbel}
 As $n\to \infty, k \to \infty, n/k \to \infty$,
$$W_n \Rightarrow W$$
in $D_l(0,1]$ where $W$ is a Brownian motion in $D_l(0,1]$.
\end{Lemma}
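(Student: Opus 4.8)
The plan is to recognize $W_n$ as a centered tail empirical process and to prove its weak convergence by the classical two steps—convergence of finite-dimensional distributions and tightness—after first disposing of the deterministic centering error. Writing $y=-\ln t$ and $p_n(t):=\bar F\big(c(n/k)(-\ln t)+d(n/k)\big)$, I would split
\[
W_n(t)=\underbrace{\sqrt k\Big(\nu_n(-\ln t,\infty]-\tfrac nk\,p_n(t)\Big)}_{=:\tilde W_n(t)}\;+\;\underbrace{\sqrt k\Big(\tfrac nk\,p_n(t)-t\Big)}_{=:b_n(t)},
\]
so that $\tilde W_n$ is centered at its exact mean while $b_n$ is deterministic. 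Assumption~\ref{assmp:ME1:xi0} says precisely that $b_n(t)\to 0$ (pointwise, and in $L_1$ after the change of variables $y=-\ln t$), so on each compact $[\epsilon,1]$ the bias is asymptotically negligible and, by Slutsky, $W_n$ and $\tilde W_n$ have the same limit. Since the claimed limit is continuous, it suffices to prove $\tilde W_n\Rightarrow W$ in $D[\epsilon,1]$ for every $\epsilon>0$ and then pass to the local-uniform topology of $D_l(0,1]$; restricting to $[\epsilon,1]$ also sidesteps the delicate behaviour at the left endpoint $t=0$.

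For the finite-dimensional distributions, fix $0<t_1\le\cdots\le t_m\le 1$ and observe that
$\tilde W_n(t_j)=k^{-1/2}\sum_{i=1}^n\big(\mathbf 1[X_i>c(n/k)(-\ln t_j)+d(n/k)]-p_n(t_j)\big)$
is, for each $n$, a sum of $n$ i.i.d.\ centered bounded terms, i.e.\ a triangular array. A direct computation gives, for $s\le t$,
$\Cov(\tilde W_n(s),\tilde W_n(t))=\tfrac nk\big(p_n(s)-p_n(s)p_n(t)\big)$,
since the half-line events are nested (the event with the larger threshold is contained in the one with the smaller threshold). Using $\tfrac nk\,p_n(u)\to u$ together with $p_n(u)\to 0$, this converges to $s\wedge t$, the covariance of a standard Brownian motion; note that because we center the full sum at its mean $t$ without renormalizing by the empirical total mass, the limit is Brownian motion and not a bridge. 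Each summand is bounded by $k^{-1/2}\to 0$ with summable variances, so the Lindeberg condition holds trivially, and the Lindeberg--Feller theorem with the Cram\'er--Wold device yields the asserted joint normality. Combined with $b_n\to 0$, this gives finite-dimensional convergence of $W_n$ to $W$.

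The hard part will be tightness of $\{\tilde W_n\}$ on $[\epsilon,1]$, which I would obtain from the moment criterion for $D$ in \cite{billingsley:1968}. The key structural fact is that for $s\le t\le r$ a single observation lies in at most one of the two disjoint strata $\{c(n/k)(-\ln t)+d(n/k)<X_i\le c(n/k)(-\ln s)+d(n/k)\}$ and $\{c(n/k)(-\ln r)+d(n/k)<X_i\le c(n/k)(-\ln t)+d(n/k)\}$, so the increments $\tilde W_n(t)-\tilde W_n(s)$ and $\tilde W_n(r)-\tilde W_n(t)$ are built from indicators of disjoint events. Expanding the product and exploiting this disjointness yields a bound of the form
\[
\E\Big[(\tilde W_n(t)-\tilde W_n(s))^2(\tilde W_n(r)-\tilde W_n(t))^2\Big]\le C\big(H_n(t)-H_n(s)\big)\big(H_n(r)-H_n(t)\big)\le C\big(H_n(r)-H_n(s)\big)^2,
\]
where $H_n(t):=\tfrac nk\,p_n(t)$ is nondecreasing in $t$ and converges to the continuous function $t$. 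This is exactly the increment condition needed for tightness in $D[\epsilon,1]$. Since $\epsilon>0$ is arbitrary and the limit $W$ has continuous paths, finite-dimensional convergence together with tightness on every $[\epsilon,1]$ gives $\tilde W_n\Rightarrow W$, and hence $W_n\Rightarrow W$, in the local-uniform topology of $D_l(0,1]$. The two points requiring the most care will be the uniform control of the moment bound in $n$ (so that $C$ does not depend on $n$) and the verification that the deterministic bias $b_n$ vanishes in the local-uniform sense, both of which are supplied by Assumption~\ref{assmp:ME1:xi0} and the monotonicity of $H_n$.
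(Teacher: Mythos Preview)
Your argument is correct, but it follows a genuinely different route from the paper's. The paper does not carry out a finite-dimensional plus tightness proof by hand; instead it invokes Theorem~2.1 of \cite{rootzen:2009}, which gives weak convergence of the tail empirical process directly once four blocking conditions C1--C4 are verified. With the choice $r_n=\min\{k^{1/4},(n/k)^{1/2}\}$ and $l_n=1$, the authors check C1 by bounding $\E[N_n(u,v)^2\mid N_n(u,v)\neq 0]$, C2 is trivial from independence, C3 is the covariance computation $\bar F((u\vee v)c(n/k)+d(n/k))/\bar F(d(n/k))\to e^{-(u\vee v)}$, and C4 follows from the choice of $r_n$. This yields $\sqrt k\,(\nu_n(u,\infty]-\E\nu_n(u,\infty])\Rightarrow G$ in $D[0,\infty)$ with covariance $e^{-(u\vee v)}$; the time change $u\mapsto-\ln t$ then identifies $G$ as Brownian motion on $(0,1]$.

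Your approach is more elementary and fully self-contained---Lindeberg--Feller for the marginals and Billingsley's moment criterion for tightness are standard textbook tools, and you make explicit the decomposition $W_n=\tilde W_n+b_n$ into the exactly centered process and the deterministic bias, which the paper's proof leaves implicit. The price is length: Rootz\'en's theorem packages the tightness argument once and for all (and is designed to accommodate weak dependence, which is irrelevant here but explains the blocking parameters). One point where you are arguably more careful than the paper: you flag that passing from the centered process $\tilde W_n$ to $W_n$ requires $b_n\to 0$ locally uniformly, not merely pointwise, and identify Assumption~\ref{assmp:ME1:xi0} together with the monotonicity of $H_n$ as the tools to secure this; the paper's proof simply asserts the final step after the time change without isolating the bias at all.
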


\begin{proof}
We check the conditions C1-C4 of  \cite[Theorem 2.1]{rootzen:2009}. In this part of the proof whenever we write `$\sim$' between two expressions, it means the asymptotics hold for $n,k,n/k \to \infty$. Now  following the notations used in the aforementioned paper, we set $ r_{n}=\min   \{  k^{1/4}, (n/k)^{1/2} \} $ and $ l_{n}=1 $. For any $ u,v \in \R $ let
\[
	N_{n}(u,v):=\sum_{i=1}^{r_{n}} \varepsilon_{\frac{X_i-d(n/k)}{c(n/k)}} (u,v].
\]
Then for any $ \theta<x_{T}=\infty $ (since $F$ is in a Gumbel domain of attraction, it has right end point $x_T=\infty$) with $ 0\le u,v<\theta $ we have, \[ \P[N_{n}(u,v)\neq 0] \sim r_{n}\P[u\,c(n/k)+d(n/k) <X_{1} \le v\,c(n/k)+d(n/k)] \] and
\begin{align*}
	 \E \left[ N_{n}(u,v)^{2}| N_{n}(u,v) \neq 0 \right]  &= \frac{ \E \left[ N_{n}(u,v)^{2}\right]} { \P\left[ N_{n}(u,v) \neq 0 \right]}\\
	&    \sim 1+r_{n}\P\left[u\,c(n/k)+d(n/k) <X_{1} \le v\,c(n/k)+d(n/k)\right] \\
	& \le 1+const. r_{n} k/n
 \end{align*}
 which is bounded by the choice of $ r_{n} $. Hence condition C1 holds. 
 Condition C2 holds as the random variables $ X_{i} $ are assumed to be independent. Next note that for any $ 0\le u,v<\infty $,
 \begin{align*}
 	&\frac{ 1}{ r_{n} \bar F(d(n/k))} Cov \left( \sum_{i=1}^{r_{n}}\varepsilon_{\frac{X_i-d(n/k)}{c(n/k)}} (u,\infty] ,  \sum_{i=1}^{r_{n}}\varepsilon_{\frac{X_i-d(n/k)}{c(n/k)}} (v,\infty]\right)     \\
 	&  =\frac{ 1}{ \bar F(d(n/k))} Cov \left( \varepsilon_{\frac{X_1-d(n/k)}{c(n/k)}} (u,\infty] , \varepsilon_{\frac{X_1-d(n/k)}{c(n/k)}} (v,\infty]\right) \\
	& \sim \frac{ \bar F \left( (u\vee v) c(n/k) +d(n/k) \right) } {\bar F (d(n/k))} \\&\to \exp \left( - u\vee v \right) .
  \end{align*}
  Hence C3 holds and obviously C4 holds because of the choice of $ r_{n} $. Hence, by \cite[Theorem 2.1]{rootzen:2009}
  \[
	\sqrt{k}\left(\nu_n(u, \infty]  - \E\left(\nu_n(u, \infty] \right)\right) \Rightarrow G \quad \text{in }\,\, D[0,\infty),
\]
where $ G $ is a centered Gaussian process in $[0,\infty)$ with covariance function $ \exp \left( - u\vee v \right)$ and hence a time change $u\mapsto - \ln u$ gives us that $ W_{n}\Rightarrow W $  in $D_l(0,1]$ where $ W $ is a standard Brownian Motion on $[0,1]$.

\end{proof}

\subsection{ME plot in the  Weibull case}\label{subsec:MEplot:weibull}
If $F\in D(G_{\xi,\beta})$, then we have the following characterizations for the case $\xi<0$ \citep{embrechts:kluppelberg:mikosch:1997,ghosh:resnick:2009}:
\begin{Proposition}
 If $\xi<0$ then the following are equivalent:

\begin{enumerate}
\item $F$ has a finite right end point $x_F$ and $\bar{F}(x_F-x^{-1})\in RV_{1/\xi}$.
\item $F^n(x_F+c(n)x) \to \exp\{-(-x)^{-1/\xi}\}$ for all $x \le 0$ where $c(t)=x_F-F(1-\frac 1t), t\ge 1$..
\item There exists a measurable function $\beta(u)$ such that
  \[\lim\limits_{u\to x_F} \sup\limits_{u\le x\le x_F}|F_u(x)-G_{\xi,\beta(u)}(x)|=0.\]
\end{enumerate}

\end{Proposition}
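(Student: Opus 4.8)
The plan is to show that each of (2) and (3) is equivalent to condition (1), so that (1) becomes the hub of the argument; the unifying device is the \emph{reflection at the finite endpoint} that turns the Weibull problem into a Fr\'echet-type regular-variation problem. Introduce
\[
	U(x) := \bar F\big(x_F - x^{-1}\big), \qquad x>0,
\]
so that (1) says precisely $U\in RV_{1/\xi}$. Since $\xi<0$ the index $1/\xi$ is negative and $U$ decays to $0$, so the full regular-variation toolkit---Uniform Convergence Theorem, Potter bounds, and Vervaat's inversion lemma \citep[Proposition 3.3, p.59]{resnickbook:2007}---applies to $U$ at $+\infty$. One also records that the norming obeys $U\big(1/c(t)\big)\sim 1/t$, i.e.\ $1/c(t)$ is the asymptotic inverse of $U$ at level $1/t$; hence $U\in RV_{1/\xi}$ forces $c\in RV_{\xi}$ and $c(t)\to 0$.

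For $(1)\Rightarrow(2)$ I would normalize so that $t\,\bar F\big(x_F-c(t)\big)\to 1$, which is legitimate because $U\in RV_{1/\xi}$. Writing $x<0$ as $x=-s$ with $s>0$ and using $\bar F\big(x_F+c(t)x\big)=U\big((c(t)s)^{-1}\big)$, regular variation of $U$ gives the pointwise limit $t\,\bar F\big(x_F+c(t)x\big)\to(-x)^{-1/\xi}$. The max-domain statement follows on exponentiating: as $y\uparrow x_F$ we have $-\log F(y)\sim\bar F(y)$, so $F^{t}\big(x_F+c(t)x\big)=\exp\{t\log F(\cdot)\}\to\exp\{-(-x)^{-1/\xi}\}$, and restricting the continuous parameter $t$ to integers $n$ is routine. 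The converse $(2)\Rightarrow(1)$ reverses this: the convergence $t\,\bar F(x_F+c(t)x)\to(-x)^{-1/\xi}$ feeds into Vervaat's lemma to yield regular variation of the inverse, and the convergence-to-types theorem pins the index at $1/\xi$.

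For $(1)\Rightarrow(3)$ I would run the Pickands--Balkema--de Haan computation \citep{embrechts:kluppelberg:mikosch:1997}. The excess tail is $\bar F_u(y)=\bar F(u+y)/\bar F(u)$, and putting $u=x_F-1/t$ rewrites this ratio as one of the form $U(\cdot)/U(t)$; regular variation of $U$ then gives $\bar F_u(y)\to\big(1+\xi y/\beta(u)\big)^{-1/\xi}$ with the natural scale $\beta(u)=-\xi\,(x_F-u)$, which is exactly the GPD tail $\overline{G_{\xi,\beta(u)}}$. For $(3)\Rightarrow(1)$ one specializes the uniform GPD approximation to a fixed increment and reads off that $\bar F\big(x_F-1/x\big)$ has regularly varying ratios of index $1/\xi$, i.e.\ $U\in RV_{1/\xi}$.

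The main obstacle is the \emph{uniformity} demanded in (3): the pointwise tail-ratio convergence produced by regular variation must be upgraded to convergence uniform over the whole excess range, whose length $x_F-u$ itself shrinks to $0$. The clean remedy is a Dini/P\'olya-type argument---each $F_u$ and each limiting $G_{\xi,\beta(u)}$ is monotone, and the limit is continuous on the compactified range, so pointwise convergence of monotone functions to a continuous limit is automatically uniform. The remaining work is book-keeping: tracking the moving threshold $u$, the shrinking scale $\beta(u)$, and the reflected variable at once, while every analytic input remains a direct call to the regular-variation machinery.
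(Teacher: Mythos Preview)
Your proof sketch is sound and follows the standard textbook route: reflect at the finite endpoint to reduce the Weibull case to a Fr\'echet-type regular-variation problem, use the $-\log F\sim\bar F$ trick for $(1)\Leftrightarrow(2)$, and run the Pickands--Balkema--de Haan computation with a P\'olya/Dini monotonicity argument for the uniformity in $(1)\Leftrightarrow(3)$.

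However, there is nothing to compare against here: the paper does \emph{not} prove this proposition. It is stated as a known characterization and simply attributed to \cite{embrechts:kluppelberg:mikosch:1997} and \cite{ghosh:resnick:2009}; no argument is given in the paper itself. Your reconstruction is essentially the argument one finds in those references (cf.\ Theorem~3.3.12 and the Pickands--Balkema--de Haan Theorem~3.4.13 in \cite{embrechts:kluppelberg:mikosch:1997}), so it is both correct and faithful to the sources the paper relies on. One cosmetic point: the paper's displayed formula $c(t)=x_F-F(1-\tfrac1t)$ is evidently a typo for $c(t)=x_F-F^{\leftarrow}(1-\tfrac1t)$; your identification of $1/c(t)$ as the asymptotic inverse of $U$ implicitly (and correctly) uses the intended quantile version.
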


Recall from \cite{ghosh:resnick:2009}, the following result on ME plots (there is a typographical error in the statement of the result there):
\begin{Proposition}\label{thm:xi:negative}
If $ X_{ 1},\ldots,X_{ n}$ are i.i.d. observations with distribution $ F$ which has a finite right end point $x_F$ and satisfies $ 1-F(x_F-x^{-1})\in RV_{ 1/\xi }$ as $x\to \infty$,  then in $ \mathcal{F}$,
 \begin{equation}\label{eq:lim:xiless0}
 	\mathcal{M}_{ n}:=\frac{ 1}{X_{(1)}-X_{ (k)}  }  \left\{   \big(X_{  (i)}-X_{(k)},\hat M(X_{ (i)})\big) :i=2,\ldots, k\right\}   \stackrel{ P}{\longrightarrow } \mathcal{M}:=\Big\{   \Big(t, \frac{ \xi}{1-\xi } (1-t) \Big):0 \le t \le 1  \Big\} .
\end{equation}

\end{Proposition}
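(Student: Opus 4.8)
The plan is to reduce the two–dimensional convergence of $\mathcal{M}_n$ to a single scalar statement about the intermediate upper order statistics, and then to pass from uniform convergence of the associated parametrized curves to convergence of closed sets via Lemma 2.4 of \cite{das:ghosh:2013}. Since the assertion is only in probability, there is no need for a tail empirical \emph{process}; a law-of-large-numbers version of the tail empirical \emph{measure} suffices. Concretely I would work with the reflected, normalized measure
\[
\hat\nu_n(\cdot):=\frac1k\sum_{i=1}^n\varepsilon_{(x_F-X_i)/c(n/k)}(\cdot),\qquad c(u):=x_F-b(u),
\]
and first show that $\hat\nu_n[0,y)\cinP y^{-1/\xi}$ locally uniformly in $y>0$. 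This is where the hypothesis $\bar F(x_F-x^{-1})\in RV_{1/\xi}$ enters: with $g(x):=\bar F(x_F-x^{-1})\in RV_{1/\xi}$ and since $c\in RV_\xi$ with $c(n/k)\to0$, regular variation gives
\[
\frac nk\,\E\,\hat\nu_n[0,y)=\frac nk\,\bar F\big(x_F-y\,c(n/k)\big)\longrightarrow y^{-1/\xi},
\]
while the variance vanishes because the indicators are i.i.d.\ with mean of order $k/n\to0$; a monotonicity (P\'olya) argument upgrades this to locally uniform convergence in probability.

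Next I would invert. Because $y\mapsto\hat\nu_n[0,y)$ is nondecreasing and $(x_F-X_{(\lceil kt\rceil)})/c(n/k)$ is essentially its generalized inverse at level $t$, the convergence above together with continuity of the limit yields, for each $\epsilon\in(0,1)$,
\[
\frac{x_F-X_{(\lceil kt\rceil)}}{c(n/k)}\cinP t^{-\xi}\qquad\text{uniformly on }[\epsilon,1],
\]
which is the exact analogue of the order-statistic input of the Fr\'echet case in \cite{das:ghosh:2013} (indeed the substitution $Z=1/(x_F-X)$ turns the present hypothesis into $\bar F_Z\in RV_{1/\xi}$, a Fr\'echet problem of index $-1/\xi$). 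Two consequences follow. Taking $t=1$ gives $x_F-X_{(k)}\sim c(n/k)$, while the maximum obeys $x_F-X_{(1)}=o_P(c(n/k))$ because $\P[x_F-X_{(1)}>\delta\,c(n/k)]=(1-\bar F(x_F-\delta\,c(n/k)))^n\approx\exp(-k\,\delta^{-1/\xi})\to0$; hence the normalizer satisfies $X_{(1)}-X_{(k)}\sim c(n/k)$ and the first coordinate of the plot converges to $1-t^{-\xi}$.

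For the second coordinate I would use the integral form $\hat M(X_{(i)})=\frac{1}{i-1}\int_{X_{(i)}}^{x_F}\#\{j:X_j>v\}\,dv$, substitute $w=(x_F-v)/c(n/k)$ and recognize $\#\{j:X_j>v\}=k\,\hat\nu_n[0,w)$, giving
\[
\frac{\hat M(X_{(\lceil kt\rceil)})}{c(n/k)}=\frac{k}{\lceil kt\rceil-1}\int_0^{(x_F-X_{(\lceil kt\rceil)})/c(n/k)}\hat\nu_n[0,w)\,dw\cinP\frac1t\int_0^{t^{-\xi}}w^{-1/\xi}\,dw=\frac{-\xi}{1-\xi}\,t^{-\xi},
\]
by the two convergences above and dominated convergence. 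Eliminating the parameter via $t^{-\xi}=1-(\text{first coordinate})$ shows the parametrized curve converges uniformly on $[\epsilon,1]$ to the segment through $(1,0)$ of slope $\xi/(1-\xi)$, i.e.\ $\{(s,\tfrac{\xi}{1-\xi}(s-1)):0\le s\le1\}$, which is the claimed line (the $(1-t)$ in the statement being the sign typo for $(t-1)$ that the authors flag). Letting $\epsilon\downarrow0$ completes the $\mathcal{F}$-convergence: the inner (fill-in) half is immediate from uniform convergence on each $[\epsilon,1]$, and for the outer half I would check directly that the top order statistics $i=2,\dots,\lceil\epsilon k\rceil$ cannot produce stray points — their first coordinates lie in $[1-\epsilon^{-\xi}+o_P(1),1]$, and since $\hat M(X_{(i)})\le X_{(1)}-X_{(\lceil\epsilon k\rceil)}$ for such $i$, their second coordinates lie in $[0,\epsilon^{-\xi}+o_P(1)]$; as $\epsilon\downarrow0$ both intervals shrink to $\{1\}$ and $\{0\}$, so no limit point lies off $\mathcal{M}$. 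Invoking Lemma 2.4 of \cite{das:ghosh:2013} then gives $\mathcal{M}_n\cinP\mathcal{M}$ in $\mathcal{F}$.

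The step I expect to be the main obstacle is precisely the uniform control near $t=0$, i.e.\ over the genuinely extreme upper order statistics: the approximation $x_F-X_{(\lceil kt\rceil)}\approx c(n/k)\,t^{-\xi}$ degrades as $t\downarrow0$, the normalizer is built from the extreme $X_{(1)}$ rather than an intermediate statistic, and the integral approximation of $\hat M$ must be shown not to blow up there. Establishing the $o_P$ and boundary estimates uniformly enough to rule out spurious limit points (the outer semicontinuity in $\mathcal{F}$), rather than merely proving convergence on compact $t$-intervals, is the delicate part; everything else is regular-variation bookkeeping parallel to the Fr\'echet argument of \cite{das:ghosh:2013}.
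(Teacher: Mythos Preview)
The paper does not supply its own proof of this proposition: it is explicitly \emph{recalled} from \cite{ghosh:resnick:2009} (with a parenthetical remark that the version there contains a typographical error), so there is nothing in the present paper to compare your argument against directly. That said, your reconstruction is correct and is exactly the route the authors take for the companion weak-limit result, Theorem~\ref{thm:weaklimit:MEplot:xiless0}: reflect to $x_F-X$, use the tail empirical measure $\hat\nu_n$ with normalization $c(n/k)=x_F-b(n/k)$, invert to get the order-statistic asymptotics, and express $\hat M$ as an integral against $\hat\nu_n$ --- in other words, the Fr\'echet argument of \cite{das:ghosh:2013} transported via $Z=1/(x_F-X)$. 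Your observation about the sign in the limit set is also consistent with the paper's own centering in Theorem~\ref{thm:weaklimit:MEplot:xiless0}, where the second coordinate is centered at $\tfrac{\xi}{\xi-1}(i/k)^{-\xi}=\tfrac{-\xi}{1-\xi}(i/k)^{-\xi}\ge 0$, i.e.\ the line $\{(t,\tfrac{\xi}{1-\xi}(t-1)):0\le t\le 1\}$.
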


In this paper we obtain the weak limit of the ME plot when the null hypothesis that  $ \bar F(x_F-x^{-1})\in RV_{ 1/\xi}$ for some $ \xi<0$ holds.
In the same spirit as \cite{das:ghosh:2013} we deal with the tail empirical process. Denote by $\nu_n$:
\begin{align}
 \nu_n(\cdot):= \frac1k \sum\limits_{i=1}^n \varepsilon_{\frac{x_F-X_i}{c(n/k)}}.
\end{align}
Following Theorem 4.2 in \cite{resnickbook:2007}, we can show that
\[\nu_n \Rightarrow \nu \ \ \ \ \text{in} \ \ \bM_{+}[0,\infty)\]
where $\nu[0,x)=x^{-1/\xi}, x\ge 0$.
Now define for $k:=k(n)<n$ and $y\ge 0$:
\begin{align}
 W_n(y):& = \sqrt{k}\left(\frac 1k \sum\limits_{i=1}^n \varepsilon_{\frac{x_F-X_i}{c(n/k)}} [0,y^{-\xi}) - \frac nk \bar{F} \left(x_F-c(n/k)y^{-\xi}\right)\right)\label{eq:Wn1:xi0}\\
        & = \sqrt{k}\left(\nu_n[0,y^{-\xi})  - \E\left(\nu_n[0,y^{-\xi})\right)\right). \label{eq:Wn2:xi0}
\end{align}
The next result in the spirit of \citep[Theorem 9.1]{resnickbook:2007} and also similar to Lemma \ref{lem:weakfornu_nGumbel}.

\begin{Lemma}\label{lem:weakfornu_nWeibull}
 As $n\to \infty, k \to \infty, n/k \to \infty$,
$$W_n \Rightarrow W$$
in $D[0,\infty)$ where $W$ is a Brownian motion in $D[0,\infty)$.
\end{Lemma}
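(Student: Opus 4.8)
The plan is to follow the proof of Lemma~\ref{lem:weakfornu_nGumbel} essentially line for line, reducing the functional central limit theorem for the tail empirical process to a verification of conditions C1--C4 of \cite[Theorem 2.1]{rootzen:2009}; alternatively one may invoke \cite[Theorem 9.1]{resnickbook:2007}. Writing $Z_n := (x_F - X_1)/c(n/k) \ge 0$, note that $\nu_n[0,a)$ counts the rescaled distances to the right endpoint $x_F$ that fall below $a$, and $\E(\nu_n[0,a)) = \frac nk\bar F(x_F - a\,c(n/k))$. Since all points are nonnegative, the extremal cluster sits near $0$, so I would work throughout with the localized intervals $[0,a)$, $a\ge 0$ (equivalently $a = y^{-\xi}$). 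Exactly as in the Gumbel case I set $r_n = \min\{k^{1/4},(n/k)^{1/2}\}$, $l_n = 1$, and form the block sums $N_n(a_1,a_2) = \sum_{i=1}^{r_n}\varepsilon_{(x_F-X_i)/c(n/k)}[a_1,a_2)$. Conditions C1 and C4 then follow from the same computation as before, namely $\E[N_n(a_1,a_2)^2 \mid N_n \neq 0] \le 1 + \mathrm{const}\cdot r_n k/n$, which is bounded by the choice of $r_n$, while C2 is immediate from the independence of the $X_i$.

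The crux is condition C3, the convergence of the normalized covariances, where the regular variation at the finite endpoint enters. Using the definition $c(t) = x_F - F^{\leftarrow}(1-t^{-1})$ one has $\frac nk\bar F(x_F - c(n/k)) \to 1$, and since $\bar F(x_F - s) \in RV_{-1/\xi}$ as $s\downarrow 0$ (this is the hypothesis $\bar F(x_F-x^{-1})\in RV_{1/\xi}$ read at $s=x^{-1}$, with $-1/\xi>0$ because $\xi<0$), I obtain for every $a>0$
\[ \frac nk\bar F\big(x_F - c(n/k)\,a\big) = \frac nk\bar F\big(x_F-c(n/k)\big)\cdot\frac{\bar F(x_F - c(n/k)a)}{\bar F(x_F-c(n/k))} \longrightarrow a^{-1/\xi} = \nu[0,a). \]
For $0\le a_1\le a_2$ the nesting $\{Z_n<a_1\}\subseteq\{Z_n<a_2\}$ gives $\Cov\big(\varepsilon_{Z_n}[0,a_1),\varepsilon_{Z_n}[0,a_2)\big) = \P[Z_n<a_1]-\P[Z_n<a_1]\P[Z_n<a_2]$; multiplying by $n/k$, the leading term tends to $(a_1\wedge a_2)^{-1/\xi}$ by the display above, while the product term is $\frac nk\,O((k/n)^2)=O(k/n)\to 0$. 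Hence the normalized covariance converges to $(a_1\wedge a_2)^{-1/\xi}$, which verifies C3.

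With C1--C4 in place, \cite[Theorem 2.1]{rootzen:2009} yields $\sqrt{k}\big(\nu_n[0,a)-\E(\nu_n[0,a))\big)\Rightarrow \widetilde W(a)$ in $D[0,\infty)$, where $\widetilde W$ is a centered Gaussian process with covariance $(a_1\wedge a_2)^{-1/\xi}$. Recalling that $W_n(y)=\sqrt{k}(\nu_n[0,y^{-\xi})-\E(\nu_n[0,y^{-\xi})))$ and that $g:y\mapsto y^{-\xi}$ is a continuous increasing homeomorphism of $[0,\infty)$ fixing $0$ (since $-\xi>0$), composition with $g$ is continuous on $D[0,\infty)$, so by the continuous mapping theorem $W_n = \widetilde W_n\circ g \Rightarrow \widetilde W\circ g$. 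The limit $\widetilde W\circ g$ has covariance $(y_1^{-\xi}\wedge y_2^{-\xi})^{-1/\xi} = \big((y_1\wedge y_2)^{-\xi}\big)^{-1/\xi} = y_1\wedge y_2$, i.e.\ it is a standard Brownian motion $W$ on $[0,\infty)$, as claimed. Note that, unlike the Gumbel case, the reparametrization is already built into the definition of $W_n$, so no additional time change is needed.

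I expect the main obstacle to be the functional, as opposed to finite-dimensional, part of the statement: the finite-dimensional limits fall out immediately from the covariance computation above, but tightness near $a=0$ and for large $a$—precisely what the block conditions C1 and C4 together with the choice $r_n=\min\{k^{1/4},(n/k)^{1/2}\}$ control—requires checking that the localized intervals $[0,a)$ fit the hypotheses of \cite[Theorem 2.1]{rootzen:2009}. Once the orientation is fixed by counting points in $[0,a)$ rather than over upper tails, the verification parallels Lemma~\ref{lem:weakfornu_nGumbel}.
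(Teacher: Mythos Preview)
Your proposal is correct and is exactly the approach the paper takes: the paper's own proof simply says to go through the steps of Lemma~\ref{lem:weakfornu_nGumbel} (or \cite[Theorem~9.1]{resnickbook:2007}), and you have carried out precisely that program, verifying conditions C1--C4 of \cite[Theorem~2.1]{rootzen:2009} with the same block size $r_n=\min\{k^{1/4},(n/k)^{1/2}\}$ and using the regular variation of $\bar F(x_F-\cdot)$ at $0$ to obtain the covariance limit $(a_1\wedge a_2)^{-1/\xi}$, followed by the time change $y\mapsto y^{-\xi}$ to recover Brownian motion. Your write-up in fact supplies more detail than the paper, which omits the proof entirely.
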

The proof follows by going through the steps of the proof of Lemma \ref{lem:weakfornu_nGumbel} or \citep[Theorem 9.1]{resnickbook:2007}. Let us also assume the following:
\begin{Assumption}\label{assmp:ME1:xiless0}
$ F$ satisfies the following
\begin{align*}
[1]& \ \   \sqrt{k} \left(\frac nk \bar{F} \left(x_F-c(n/k)y\right) - y^{ -1/\xi}\right)\to 0\ \ \ \ \text{for all} \ \ y \ge 0,\\
 [2]& \ \  	\sqrt{k} \int_{ 0}^{ 1}\Big|\frac nk \bar{F} \left(x_F-c(n/k)y\right) - y^{ -1/\xi}  \Big|   dy\to 0.
 	\end{align*}

 as $n,k,n/k \to \infty$.
\end{Assumption}

\begin{Theorem}\label{thm:weaklimit:MEplot:xiless0}
Suppose $ X_{ 1},\ldots,X_{ n}$ are i.i.d. observations from a distribution $ F$ which has a finite right end point $x_F$ and satisfies $ 1-F(x_F-x^{-1})\in RV_{ 1/\xi }, \xi <0$ as $x\to \infty$ and Assumption \ref{assmp:ME1:xiless0} holds.
 Then for any $ 0<\epsilon <1$, as $ n,k,n/k\to \infty$,
 \begin{align*}
 	&\mathcal{MN}_{ n}:=\left\{\Bigg( 1-\Bigg(\frac{ i}{k }\Bigg)^{ -\xi}, \frac{ \xi}{\xi-1 } \Bigg( \frac{ i}{k } \Bigg)^{ -\xi} \Bigg)  \right.\\
	& \ \ \ \ \ \ \ \ \ \ \ \ +\left. \sqrt{k}\left( \frac{ X_{ (i)}-X_{(k)}}{X_{(1)}-X_{ (k)}  } -\left( 1-\Big(\frac{ i}{k }\Big)^{ -\xi}\right), \frac{\hat M(X_{ (i)})}{X_{(1)}-X_{ (k)}} - \frac{ \xi}{\xi-1 } \Big( \frac{ i}{k } \Big)^{ -\xi}  \right) :i=\lceil \epsilon k\rceil ,\ldots,k\right\} \\
	& \Rightarrow \mathcal{MN}:= \left\{\left( 1-t^{ -\xi}+\xi t^{ -(1+\xi)}B(t),\frac{ \xi}{\xi-1 }t^{ -\xi}+  \xi t^{ -1} \int_{ 0}^{ t} y^{ -(1+\xi)}B(y)dy \right) , \epsilon \le t\le 1\right\} \ \ \ \ \mbox{ in } \mathcal{F},
\end{align*}
where $ B(t)$ is the standard Brownian bridge on $ [0,1]$ restricted to $ (0,1]$.
\end{Theorem}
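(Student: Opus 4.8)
The plan is to mirror the proof of Theorem 4.3 in \cite{das:ghosh:2013} (the finite-variance Fr\'echet case) and the proof of Theorem \ref{thm:weaklimit:MEplot:xi0} above, since the governing tail empirical measure is a power law in each setting: here $\nu[0,z)=z^{-1/\xi}$ with $-1/\xi>0$, the exact analogue of the Fr\'echet tail $\nu(z,\infty]=z^{-1/\xi}$. Concretely, I would start from Lemma \ref{lem:weakfornu_nWeibull}, which gives $W_n\Rightarrow W$ (standard Brownian motion) in $\D[0,\infty)$, where $W_n(y)=\sqrt{k}(\nu_n[0,y^{-\xi})-\E\,\nu_n[0,y^{-\xi}))$. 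Using Assumption \ref{assmp:ME1:xiless0}[1] to replace the random centering $\E\,\nu_n[0,y^{-\xi})=\tfrac nk\bar F(x_F-c(n/k)y^{-\xi})$ by its deterministic limit $y$, and changing variables $z=y^{-\xi}$, this reads $\sqrt{k}(\nu_n[0,z)-z^{-1/\xi})\Rightarrow W(z^{-1/\xi})$.

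First I would recover the empirical quantile process. Writing $q_n(t):=\frac{x_F-X_{(\lceil kt\rceil)}}{c(n/k)}$, Vervaat's lemma \citep[Proposition 3.3]{resnickbook:2007} inverts the tail empirical function and, with the functional delta method \citep[Theorem 3.9.4]{vandervaart:wellner:1996}, yields $\sqrt{k}(q_n(t)-t^{-\xi})\Rightarrow \xi t^{-(1+\xi)}W(t)$ on $[\epsilon,1]$. For the first coordinate of $\mathcal{MN}_n$, I would write $\frac{X_{(i)}-X_{(k)}}{x_F-X_{(k)}}=\frac{q_n(1)-q_n(t)}{q_n(1)}$ with $i=\lceil kt\rceil$; dividing by the anchor $q_n(1)$ (which tends to $1$) turns the Brownian motion $W$ into the Brownian bridge $B(t)=W(t)-tW(1)$ and produces the claimed limit $1-t^{-\xi}+\xi t^{-(1+\xi)}B(t)$.

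For the second coordinate I would use the representation $\frac{\hat M(X_{(\lceil kt\rceil)})}{c(n/k)}=\frac{k}{\lceil kt\rceil-1}\int_0^{q_n(t)}\nu_n[0,y)\,dy$, obtained by writing $X_{(j)}-X_{(i)}=c(n/k)(Y_{(i)}-Y_{(j)})$ with $Y_{(j)}=\frac{x_F-X_{(j)}}{c(n/k)}$ and using the counting identity $\sum_{j=1}^{i-1}(Y_{(i)}-Y_{(j)})=k\int_0^{Y_{(i)}}\nu_n[0,y)\,dy$. Continuous mapping applied to the integral functional, the delta method for the fluctuating upper limit $q_n(t)$, and Assumption \ref{assmp:ME1:xiless0}[2] to control the deterministic bias give the fluctuation $\xi t^{-1}\int_0^t y^{-(1+\xi)}W(y)\,dy$ plus a $W(1)$ term; after dividing once more by $q_n(1)$ the $W(1)$ pieces recombine into the bridge, yielding $\frac{\xi}{\xi-1}t^{-\xi}+\xi t^{-1}\int_0^t y^{-(1+\xi)}B(y)\,dy$. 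Finally I would invoke Lemma 2.4 in \cite{das:ghosh:2013} to pass from joint functional convergence of the two coordinates to convergence of the random closed sets (the plots) in $\mathcal{F}$.

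The hard part is the normalization. The statement divides by the observable range $X_{(1)}-X_{(k)}$, not by the unobservable $x_F-X_{(k)}$ that drives the clean bridge computation above. Replacing one by the other at the $\sqrt{k}$ scale requires controlling $\sqrt{k}\,Y_{(1)}=\sqrt{k}\,\frac{x_F-X_{(1)}}{c(n/k)}$, where $X_{(1)}$ is the sample maximum. Its transform $Y_{(1)}$ is the \emph{smallest} point of the tail sample, sitting at the boundary $t\downarrow 0$ outside the range where the quantile functional CLT holds, so its size must be read from the max-domain-of-attraction of $F$: since $c\in RV_\xi$, one gets $x_F-X_{(1)}\asymp c(n)$ and hence $Y_{(1)}=O_P(c(n)/c(n/k))=O_P(k^{\xi})$, i.e.\ $\sqrt{k}\,Y_{(1)}=O_P(k^{1/2+\xi})$. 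This term enters the centering of both coordinates as a common random factor $\sqrt{k}\,Y_{(1)}$ multiplying the deterministic shape, and is negligible precisely when $\xi<-1/2$; for $\xi\in[-1/2,0)$ the contribution of the extreme order statistic must be shown not to disturb the limiting set, a step with no counterpart in the Fr\'echet argument (which is anchored at $X_{(k)}$ and never sees the maximum). Establishing this is where I expect the real work to lie.
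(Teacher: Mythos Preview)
Your outline is exactly the paper's approach: the paper's proof of this theorem is a two-sentence pointer saying to rerun the proof of \cite[Theorem~4.3]{das:ghosh:2013} with Lemma~\ref{lem:weakfornu_nWeibull} as the starting point, then apply Vervaat's lemma and converging-together arguments, which is precisely what you lay out in your first three paragraphs.

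Where you go beyond the paper is your final paragraph, and this is a point the paper does not address at all. You are right that replacing the unobservable anchor $x_F-X_{(k)}$ by the observed range $X_{(1)}-X_{(k)}$ injects the sample maximum into the normalization, and your order calculation
\[
\sqrt{k}\,\frac{x_F-X_{(1)}}{c(n/k)}=O_P\!\left(\sqrt{k}\,\frac{c(n)}{c(n/k)}\right)=O_P\big(k^{1/2+\xi}\big)
\]
is correct. For $\xi<-1/2$ this term is $o_P(1)$ and the Fr\'echet argument transfers verbatim; for $\xi\in(-1/2,0)$ it does not vanish and feeds into the first coordinate as a $t$-dependent shift $(1-t^{-\xi})\sqrt{k}\,Y_{(1)}$ with no finite limit, so the random set $\mathcal{MN}_n$ cannot converge to the stated bounded limit in $\mathcal{F}$. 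The paper's Remark following the theorem explicitly claims that no restriction on $\xi<0$ is needed, but the argument given there concerns only the integrability of $\int_0^t y^{-(1+\xi)}B(y)\,dy$, which is a separate issue from the anchor. So your instinct that ``this is where the real work lies'' is well founded: the paper's sketch overlooks this step entirely, and as stated the result appears to require either $\xi<-1/2$ or an additional growth condition of the type $\sqrt{k}\,c(n)/c(n/k)\to 0$, neither of which follows from Assumption~\ref{assmp:ME1:xiless0}.
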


\begin{Remark}
  This result is similar to the one obtained for $\bar{F}\in RV_{-1/\xi}$ or $\xi>0$ in Theorem 4.3 of \cite{das:ghosh:2013}; the subtle difference appears in the fact that we no longer need to
restrict the range of $\xi$ as is done there with $0<\xi<1/2$, since the integral
\[
 	\int_{ 0}^{ t} y^{ -(1+\xi)}B(y)dy \stackrel{ d}{= }   \int_{ 0}^{ t} y^{ -(1+\xi)}W(y)dy-W(1)\int_{ 0}^{ t}y^{ -\xi}  dy
\]
exists if and only if $\int_0^t s^{-2\xi} ds<\infty$ which is always true for $\xi<0$ and in turn implies that the limit $\mathcal{MN}$ exists. The truncation with $\epsilon$
with $\epsilon \le t \le 1$ is still necessary to guarantee that the limit set $\mathcal{MN}$ does not blow up for $t$ near $0$.
\end{Remark}
\begin{proof}
 The proof is omitted here as it follows using similar arguments as in the proof of \cite[Theorem 4.3]{das:ghosh:2013}. The difference occurs in the
 fact that we use the weak convergence result mentioned in Lemma \ref{lem:weakfornu_nWeibull} as our basis and apply a proper version of
Vervaat's Lemma and `converging together' arguments on this to obtain the result.
\end{proof}

\section{Creating confidence bounds from the limit results}\label{sec:CI}

In Section \ref{sec:MEplot} we obtain weak limits for ME plots for different values of $\xi\in\R$ where the underlying distribution $F \in D(G_{\xi})$. Now, depending on varying values of $\xi$ we construct the different confidence bounds following the results. We resort to Monte Carlo simulation for actually computing the limits since most of them require calculating quantiles of suprema of functionals of Brownian bridges  over a finite interval or quantiles of stable distributions.

 We need to truncate the ME plot near infinity in all the cases since the weak limits we obtain  blow up there (it relates to $t$ near $0$ in the limit of $\mathcal{MN}_n$).

\subsection{Fr\'{e}chet case:}\label{subsec:CI_frechet}
 This case has already been discussed in \cite{das:ghosh:2013} and we recall it here for the sake of completeness. Define the truncated versions of $\mathcal{M}_n$ defined in \eqref{def:ME_frechet} and its limit $\mathcal{M}$   respectively for $0<\epsilon<1$ as:
 \begin{equation}\label{eq:lim:frechet:epsilon}
 	\mathcal{M}^{ \epsilon }_{ n}:=\frac{ 1}{X_{ (k)}  }  \left\{   \big(X_{  (i)},\hat M(X_{ (i)})\big) :i=\lceil k\epsilon \rceil,\ldots, k\right\}  \ \ \ \  \mbox{ and  } \ \ \ \  \mathcal{M}^{ \epsilon }:=\Big\{   \Big(t^{-1}, \frac{ \xi}{1-\xi } t^{-1} \Big):\epsilon \le t \le 1  \Big\} .
\end{equation}
Then $ \mathcal{M}_{ n}^{ \epsilon }\stackrel{ P}{\to } \mathcal{M}^{ \epsilon } $.

\noindent{\bf{Case 1 ($0<\xi<1/2$):}} From \eqref{eq:frechetwithvar}, we have that the $(1-\alpha)100 \%$ confidence band for $\mathcal{M}^{\epsilon}$  as
  \begin{align}\label{eq:conband:frechet:var}
& \mathcal{CM}_n^{\epsilon} := \mathcal{M}_n^{\epsilon} + \left\{(x,y):x\in \left(-\frac{c_{\alpha/2,\epsilon}}{\sqrt{k}},\frac{c_{\alpha/2,\epsilon}}{\sqrt{k}}\right), y\in \left( -\frac{d_{\alpha/2,\epsilon}}{\sqrt{k}},\frac{d_{\alpha/2,\epsilon}}{\sqrt{k}} \right)  \right\},
\end{align}
where \begin{align}
c_{\alpha,\epsilon} &= \text{$(1-\alpha)$-th quantile of} \sup\limits_{\epsilon \le t \le 1}\xi t^{-(1+\xi)}B(t),\label{eq:c:frechet}\\
d_{\alpha,\epsilon}& = \text{$(1-\alpha)$-th quantile of} \sup\limits_{\epsilon \le t \le 1}\xi t^{-1} \int\limits_0^t y^{-(1+\xi)}B(y)dy.\label{eq:d:frechet}
\end{align}

Since the weak limit of properly scaled and shifted $\mathcal{M}^{ \epsilon }_n$  consists of functionals of the same Brownian Bridge in both components,
\eqref{eq:conband:frechet:var} provides an asymptotic confidence bound around $\mathcal{M}^{\epsilon}$ with $P(\mathcal{M}^{\epsilon} \subset \mathcal{CM}_n^{\epsilon}) \ge (1-\alpha)$ for large $n$. 

\noindent{\bf{Case 2 ($1/2<\xi<1$):}} From \eqref{eq:frechetwithoutvar}, we have the $(1-\alpha )100\% $ confidence band for $ \mathcal{M}^{ \epsilon }$ as
  \begin{align}\label{eq:conband:frechet:novar}
\mathcal{CM}_n^{\epsilon} &=\left\{ \left( \frac{ X_{ (\lceil kt \rceil)}}{ X_{ (k)}}, \frac{ \hat M(X_{ (\lceil kt \rceil)})}{ X_{ (k)} } \right) + \left(-\frac{c_{\alpha_1/2,\epsilon}}{\sqrt{k}},\frac{c_{\alpha_1/2,\epsilon}}{\sqrt{k}}\right)\times \left( \frac{X_{ (1)} d_{1-\alpha_2/2}}{\lceil kt \rceil X_{ (k)}},\frac{X_{ (1)}d_{\alpha_2/2}}{ \lceil kt \rceil X_{ (k)}} \right): \epsilon \le t \le 1  \right\},
\end{align}
where
\begin{align*}
d_{\alpha} &= \text{$(1-\alpha)$-th quantile of } S_{ 1/\xi} \text{ defined in \eqref{eq:charstable}}.
\end{align*}
Here $0<\alpha_1,\alpha _{ 2}<1$ are chosen such that $(1-\alpha)=(1-\alpha_1)(1-\alpha_{ 2})$. Since the random components in the first and second components in the limit of \eqref{eq:frechetwithoutvar}
are independent this gives us the right confidence interval so that $P(\mathcal{M}^{\epsilon} \subset \mathcal{CM}_n^{\epsilon} ) \ge 1-\alpha$. The above quantiles are calculated using Monte Carlo simulation methods. In real data examples $\xi$ is estimated using a Hill estimator, or any reasonable estimator for the tail index of a heavy-tailed distribution.

\subsection{Gumbel case}\label{subsec:CI_gumbel}

 This is the case where $F\in D(G_0)$. Many well-known distribution functions such as exponential, normal, log-normal distributions fall into this class. First we define the truncated versions of $\mathcal{M}_n$ defined in \eqref{def:ME_gumbel} and its limit $\mathcal{M}$   respectively for $0<\epsilon<1$ as:
 \begin{equation}\label{eq:lim:gumbel:epsilon}
 	\mathcal{M}^{ \epsilon }_{ n}:=\frac{ 1}{X_{(\lceil k/e \rceil)}-X_{ (k)}  }  \left\{   \big(X_{  (i)}-X_{(k)},\hat M(X_{ (i)})\big) :i=\lceil k\epsilon \rceil,\ldots, k\right\}  \ \ \ \  \mbox{ and  } \ \ \ \  \mathcal{M}^{ \epsilon }:=\left\{   \left(- \ln(t), 1 \right):\epsilon \le t \le 1  \right\} .
\end{equation}
Then $ \mathcal{M}_{ n}^{ \epsilon }\stackrel{ P}{\to } \mathcal{M}^{ \epsilon } $. Using Theorem \ref{thm:weaklimit:MEplot:xi0}, we have that the $(1-\alpha)100 \%$ confidence band for $\mathcal{M}^{\epsilon}$  as
  \begin{align}\label{eq:conband:gumbel}
& \mathcal{CM}_n^{\epsilon} := \mathcal{M}_n^{\epsilon} + \left\{(x,y):x\in \left(-\frac{c_{\alpha/2,\epsilon}}{\sqrt{k}},\frac{c_{\alpha/2,\epsilon}}{\sqrt{k}}\right), y\in \left( -\frac{d_{\alpha/2,\epsilon}}{\sqrt{k}},\frac{d_{\alpha/2,\epsilon}}{\sqrt{k}} \right)  \right\},
\end{align}
where \begin{align}
c_{\alpha,\epsilon} &= \text{$(1-\alpha)$-th quantile of} \sup\limits_{\epsilon \le t \le 1} \left\{ eB(e^{-1})\ln (t) + \frac{B(t)}{t} \right\}\label{eq:c:gumbel}\\
d_{\alpha,\epsilon}& = \text{$(1-\alpha)$-th quantile of} \sup\limits_{\epsilon \le t \le 1} \left\{ eB(e^{-1}) + \frac{1}{t}\int\limits_0^t \frac{B(y)}{y}dy \right\}.\label{eq:d:gumbel}
\end{align}
By the same logic, as the earlier cases, 
\eqref{eq:conband:gumbel} provides an asymptotic confidence bound around $\mathcal{M}^{\epsilon}$ with $P(\mathcal{M}^{\epsilon} \subset \mathcal{CM}_n^{\epsilon}) \ge (1-\alpha)$ for large $n$. The quantiles are obtained using Monte Carlo simulation.

\subsection{Weibull case}\label{subsec:CI_weibull}
In this case $F\in D(G_{\xi})$ with $\xi<0$. Many distributions, especially with  bounded right hand-tail falls into this category, for example Uniform, Beta, etc.  Here we define the truncated versions of $\mathcal{M}_n$ as defined in Proposition \ref{thm:xi:negative} and its limit $\mathcal{M}$   respectively for $0<\epsilon<1$ as:
 \begin{equation}\label{eq:lim:gumbel:epsilon}
 	\mathcal{M}^{ \epsilon }_{ n}:=\frac{ 1}{X_{(1)}-X_{ (k)}  }  \left\{   \big(X_{  (i)}-X_{(k)},\hat M(X_{ (i)})\big) :i=2,\ldots, k\right\}  \    \mbox{ and  }   \  \mathcal{M}^{ \epsilon }:=\left\{   \left(t, \frac{\xi}{1-\xi} (t-1) \right):\epsilon \le t \le 1  \right\} .
\end{equation}
Then $ \mathcal{M}_{ n}^{ \epsilon }\stackrel{ P}{\to } \mathcal{M}^{ \epsilon } $. Using Theorem \ref{thm:weaklimit:MEplot:xiless0}, we have that the $(1-\alpha)100 \%$ confidence band for $\mathcal{M}^{\epsilon}$  as
  \begin{align}\label{eq:conband:weibull}
& \mathcal{CM}_n^{\epsilon} := \mathcal{M}_n^{\epsilon} + \left\{(x,y):x\in \left(-\frac{c_{\alpha/2,\epsilon}}{\sqrt{k}},\frac{c_{\alpha/2,\epsilon}}{\sqrt{k}}\right), y\in \left( -\frac{d_{\alpha/2,\epsilon}}{\sqrt{k}},\frac{d_{\alpha/2,\epsilon}}{\sqrt{k}} \right)  \right\},
\end{align}
where \begin{align}
c_{\alpha,\epsilon} &= \text{$(1-\alpha)$-th quantile of} \sup\limits_{\epsilon \le t \le 1}\xi t^{-(1+\xi)}B(t),\label{eq:c:weibull}\\
d_{\alpha,\epsilon}& = \text{$(1-\alpha)$-th quantile of} \sup\limits_{\epsilon \le t \le 1}\xi t^{-1} \int\limits_0^t y^{-(1+\xi)}B(y)dy.\label{eq:d:weibull}
\end{align}
The bounds obtained here are very similar to the one in the Fr\'echet case.
And using the same argument,
\eqref{eq:conband:weibull} provides an asymptotic confidence bound around $\mathcal{M}^{\epsilon}$ with $P(\mathcal{M}^{\epsilon} \subset \mathcal{CM}_n^{\epsilon}) \ge (1-\alpha)$ for large $n$. Similar to the previous cases, the quantiles are obtained using Monte Carlo simulation.

\section{Examples: Simulated and real data}\label{sec:data}

This section is devoted to application of the methodology developed for constructing confidence intervals around ME plots
as derived in the Section \ref{sec:CI}. 

 \begin{figure}[H]
 \includegraphics[width=12cm]{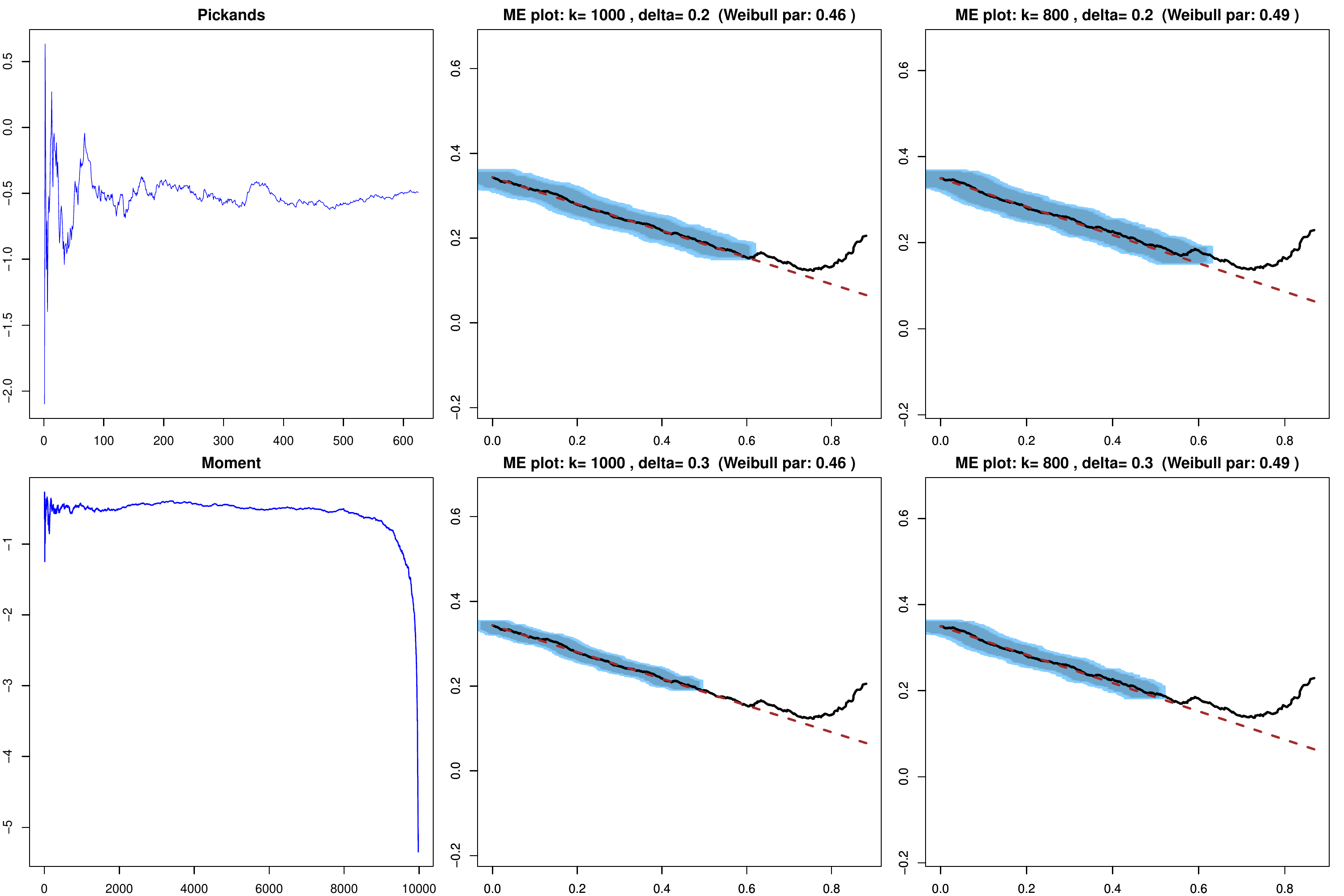}
 \caption{ME Plot for 10000 i.i.d. Generalized Pareto random variables with $\xi=-0.5$ and $\beta=1$ .}\label{fig:MEGPDneg}
 \end{figure}

Given an iid sample $X_1, \ldots, X_n \sim F$, we are concerned with detecting if $F\in D(G_{\xi})$ and if so whether $\xi$ is positive (the Fr\'echet case), zero (the Gumbel case) or
negative (the Weibull case).  The Fr\'echet case has been discussed in \cite{das:ghosh:2013} with examples. Hence we concern ourselves with the other two cases for the simulated examples. First we see how our confidence intervals work in simulated examples, and then use them on real data. In all the plots below, the light blue shade creates a $95\%$ confidence interval and the dark blue shade creates  $90\%$ confidence interval.

\begin{figure}[H]
\includegraphics[width=12cm]{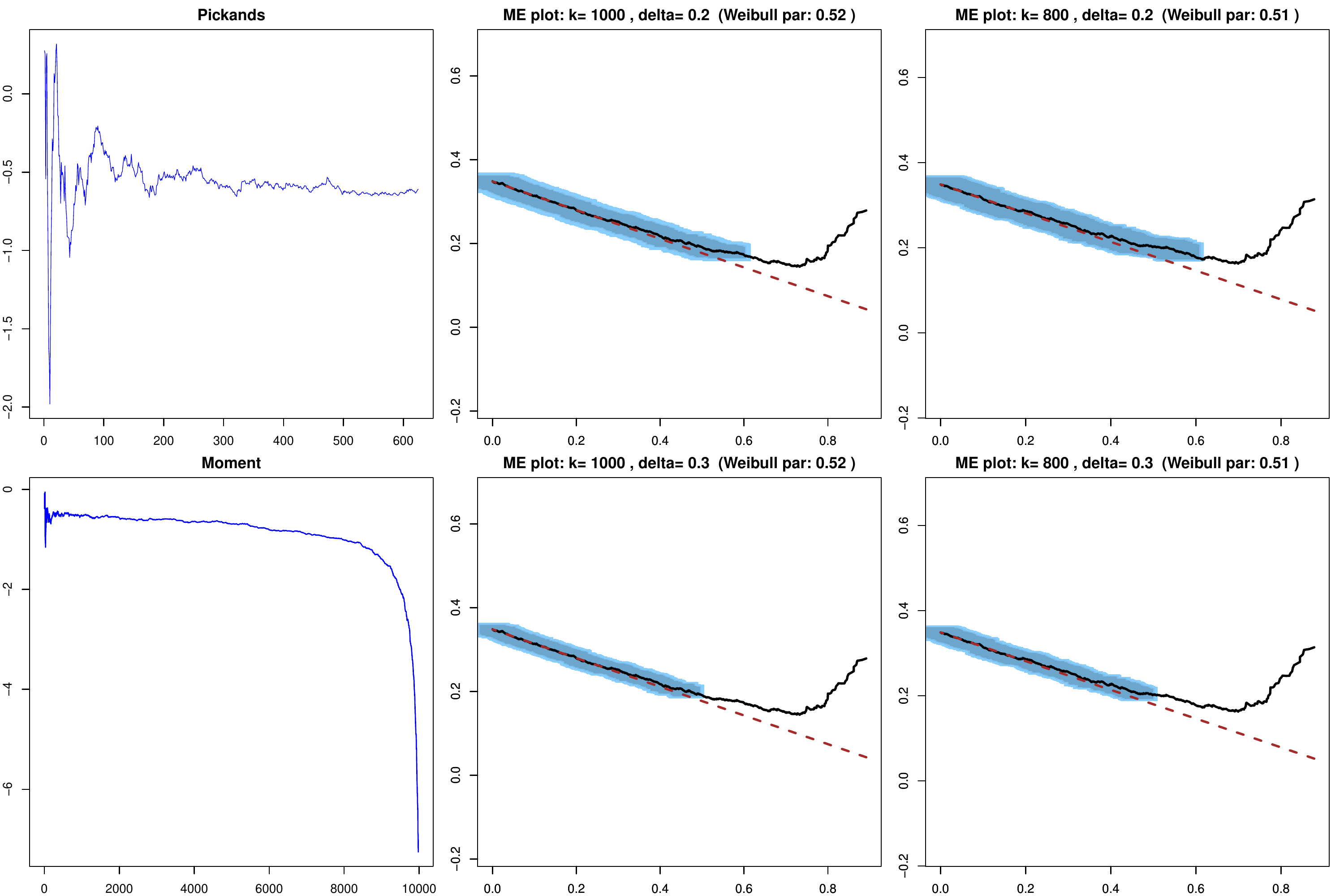}
\caption{ME Plot for 10000 i.i.d. $Beta(\alpha,\beta)$ random variables with $\alpha=2,\beta=2$.}\label{fig:MEbeta}
\end{figure}

\subsection{Simulated examples: Weibull} 
In this case we have $F\in D(G_{\xi})$ with $\xi < 0$.
We consider two families of distributions here.

 \begin{enumerate}
 \item Consider  $F\sim GPD(\xi,\beta)$ with pdf given by
 $$F(x)= 1 - \left(1+ \xi \frac{x}{\beta}\right)^{-1/\xi}, ~~ 1+  \frac{\xi x}{\beta}>0, ~\beta>0,\xi<0.$$
Of course, $F\in D(G_{\xi})$ here and $F$ is in the Weibull domain of attraction if $\xi<0$. In fact the Uniform $(0,1)$ falls into this class with $\xi=-1$ and $\beta=1$.\\ For our simulation example we take  $\xi=-0.5, \beta=1$ and generate 10000 iid samples from the distribution. 
 The two plots in the left of Figure \ref{fig:MEGPDneg} are Pickands and Moment estimate of $\xi$ for increasing values of top order statistics used. They seem reasonable close to $-0.5$. For $k=800,1000$ and $\delta=0.2,0.3$ we create confidence bounds around the ME plot (in black) which clearly covers the dashed red line with slope $-0.5$.

 \item Next consider $F \sim \text{Beta} (a, b)$ with pdf given by
 $$f(x) = \frac{\Gamma(a+b)}{\Gamma(a)\Gamma(b)}x^{a-1}(1-x)^{b-1}, ~~ 0<x<1, ~~a,b>0.$$
 In this case $F\in D(G_{-1/b})$. We take the example $a=2, b=2$ where $F\in D(G_{-0.5})$. As observed in the previous example we see that the Pickands and Moment estimates approximate $-0.5$ well; see \ref{fig:MEbeta}. We again create confidence bounds with $k=800,1000$ and $\delta=0.2,0.3$ and observe that the bounds cover the dashed red line with slope $-0.5$.
 \end{enumerate}

Thus the detection in the Weibull family looks reasonable.

\subsection{Simulated examples: Gumbel}

Distributions in the Gumbel domain of attraction are harder to detect since a data sample has to form a plot with slope zero in this case, which is statistically unlikely. Hence confidence bounds help to an extent, although as we will see through the three examples below that, in practice, a plotting technique is helpful to different degrees in different cases.

\begin{figure}[H]
\includegraphics[width=12cm]{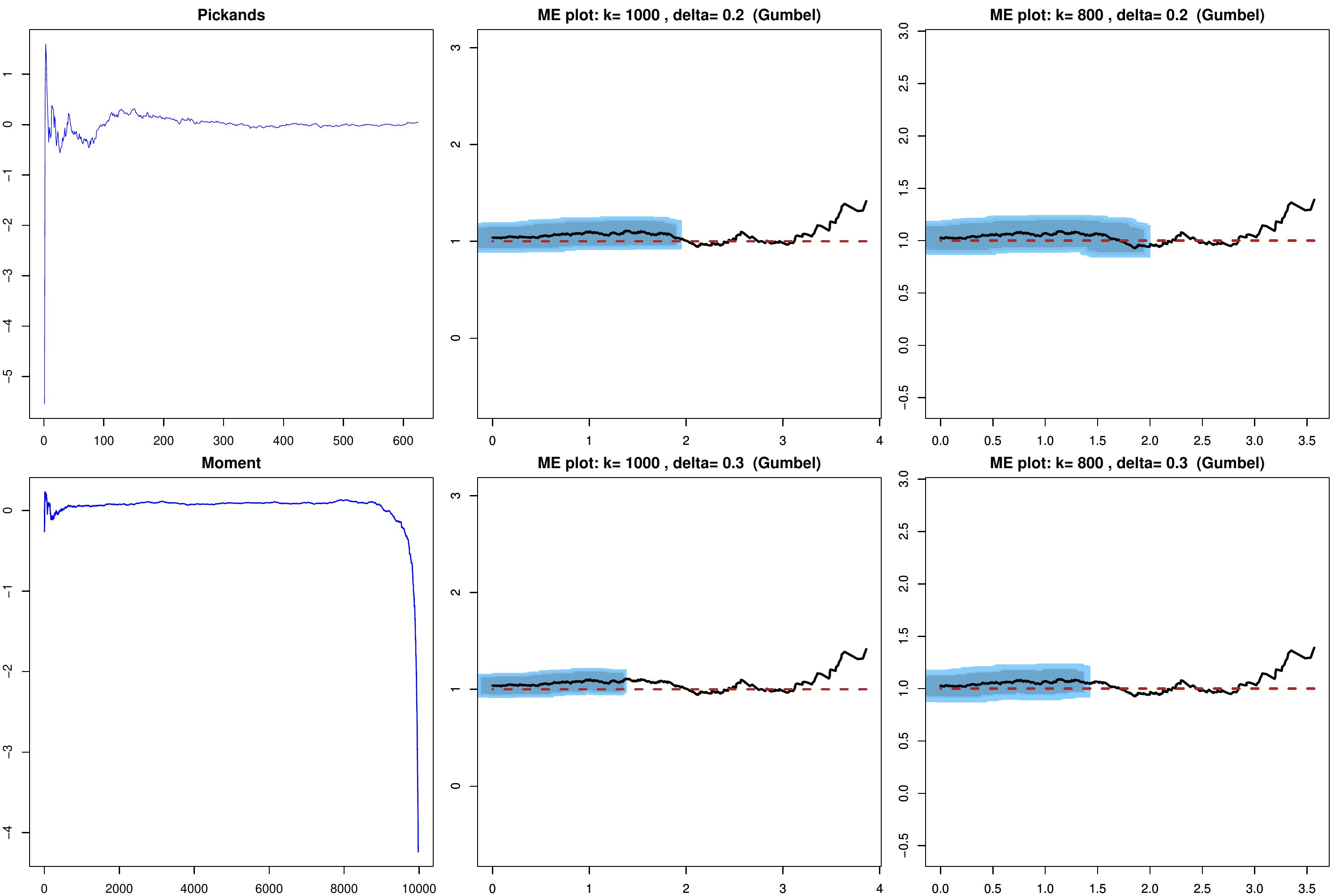}
\caption{ME Plot for 10000 i.i.d. Exponential random variables with $ \lambda=1$.}\label{fig:MEexp}
\end{figure}
\begin{figure}[H]
\includegraphics[width=12cm]{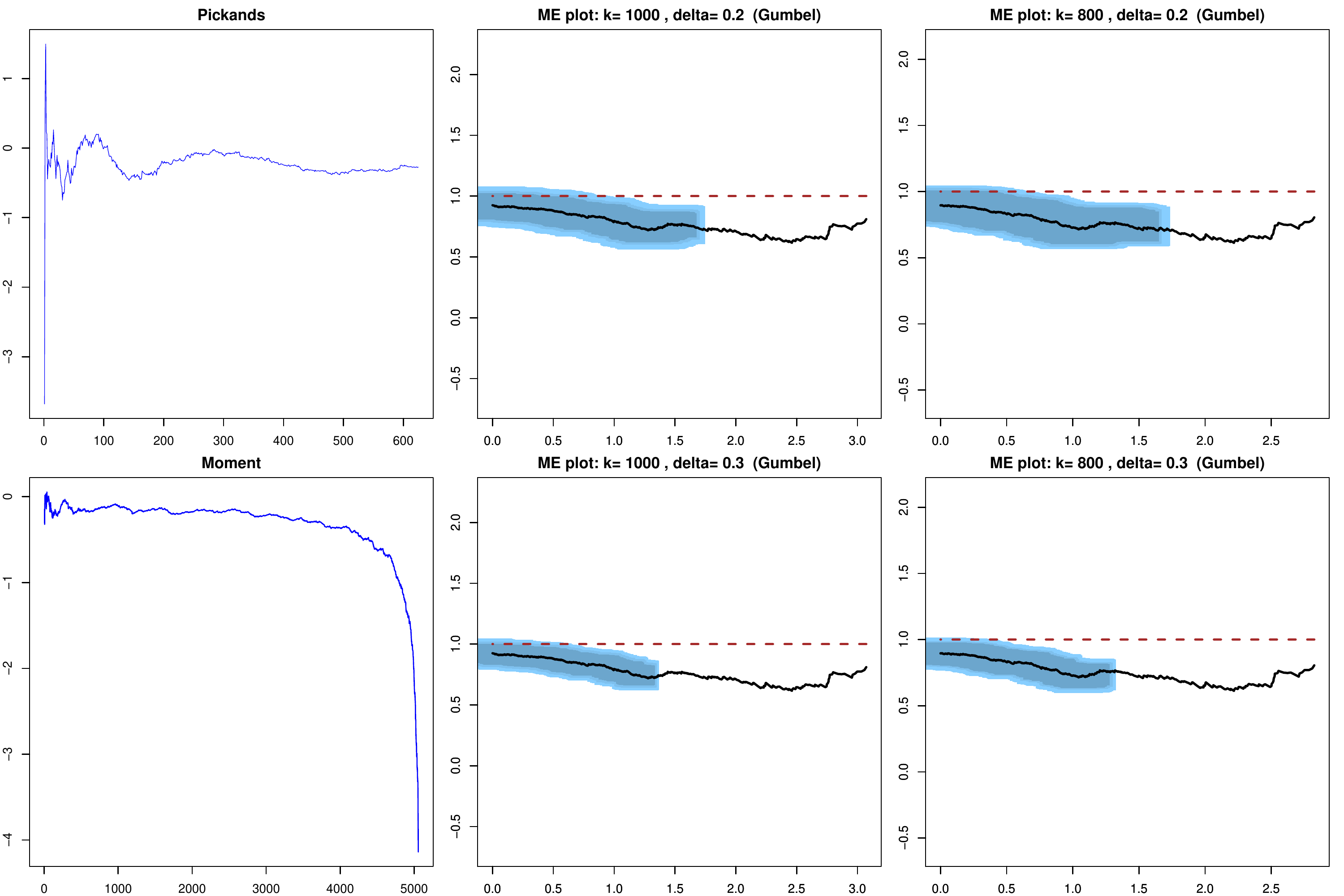}
\caption{ME Plot for 10000 i.i.d. Standard Normal random variables.}\label{fig:MEnorm}
\end{figure}
 \begin{enumerate}
 \item The first example is where $F$ follows $Exp(1)$. We generate 10000 iid samples from the distribution and create ME plots with parameters $k=800,1000$ and $\delta=0.2,0.3$; see Figure \ref{fig:MEexp}. The Pickands and Moment estimates are close to zero and the confidence intervals around the ME plot in the four different cases all cover the line with slope $0$ (and intercept $1$) as expected. So here the detection techniques works well.

\item The next data set we look at is a sample generated from $F$ which follows $N(0,1)$. We again generate 10000 iid samples from the distribution and make  ME plots with parameters $k=800,1000$ and $\delta=0.2,0.3$; see Figure \ref{fig:MEnorm}. In this case the Pickands estimate is close to zero but the  Moment estimate though close zero seem to be an underestimate. The confidence intervals around the ME plot in the four different cases all cover the dashed red line with slope $0$ (and intercept $1$) up to some point and then it doesn't. We can believe that $F\in D(G_0)$ but the case becomes less convincing than the previous example.

\begin{figure}[H]
\includegraphics[width=12cm]{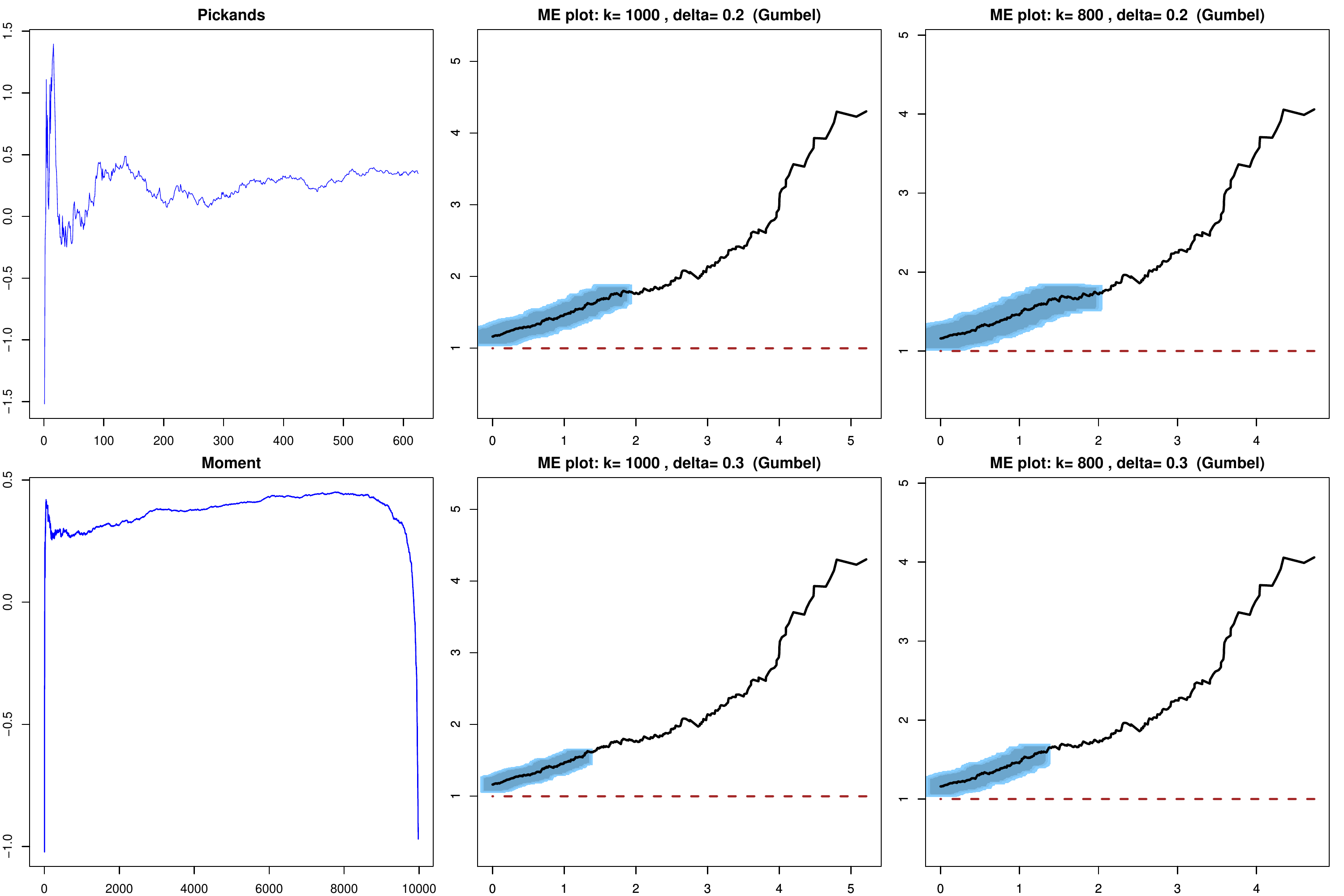}
\caption{ME Plot for 10000 i.i.d. Standard log-normal random variables.}\label{fig:MElognorm}
\end{figure}
\item Finally we look into $F$ which is a standard Lognormal distribution. It is known the a Lognormal distribution belongs to $D(G_0)$, but on the other hand we know that it has no finite moments (unlike the Normal or Exponential case). So it is on the one-hand sub-exponential or heavy-tailed although belong to a Gumbel domain of attraction.

We simulate 10000 iid samples from a standard Lognomal distribution and create ME plots as in the previous cases. The results are in Figure \ref{fig:MElognorm}. Both the Pickands estimate and the moment estimate of the extreme value parameter are much higher than the true value, that is zero. The ME plot with confidence intervals around it miss the target red dashed line of slope zero (and intercept $1$); a larger choice of $\delta$ would make the confidence intervals large enough to cover the line, but clearly our technique doesn't seem to perform so well here. 
 Since the Lognormal distribution has heavy tails we tend to have a positive slope of the ME plot as would happen in case when $F$ is in the Fr\'echet domain of attraction. Hence overall for detecting a Gumbel domain of attraction family we need to be more careful with this technique. 
 \end{enumerate}

\subsection{Observed data: Ozone concentration at Zurich urban area}
  It is of interest for environmental scientists to study \emph{ozone} concentration near urban conglomerations, as its presence in the atmosphere
  implies health risks related to respiratory diseases. Directive 2008/50/EC of the European Parliament puts the target value of ozone for its member states
  to be within $120 \mu g/m^3$. The directive says that as of January 1, 2010 ozone concentrations should not exceed this limit for more than 25 days in a calendar year
  where the daily calculation is based on maximum of daily 8-hour averages.
\begin{figure}[H]
\includegraphics[width=16cm]{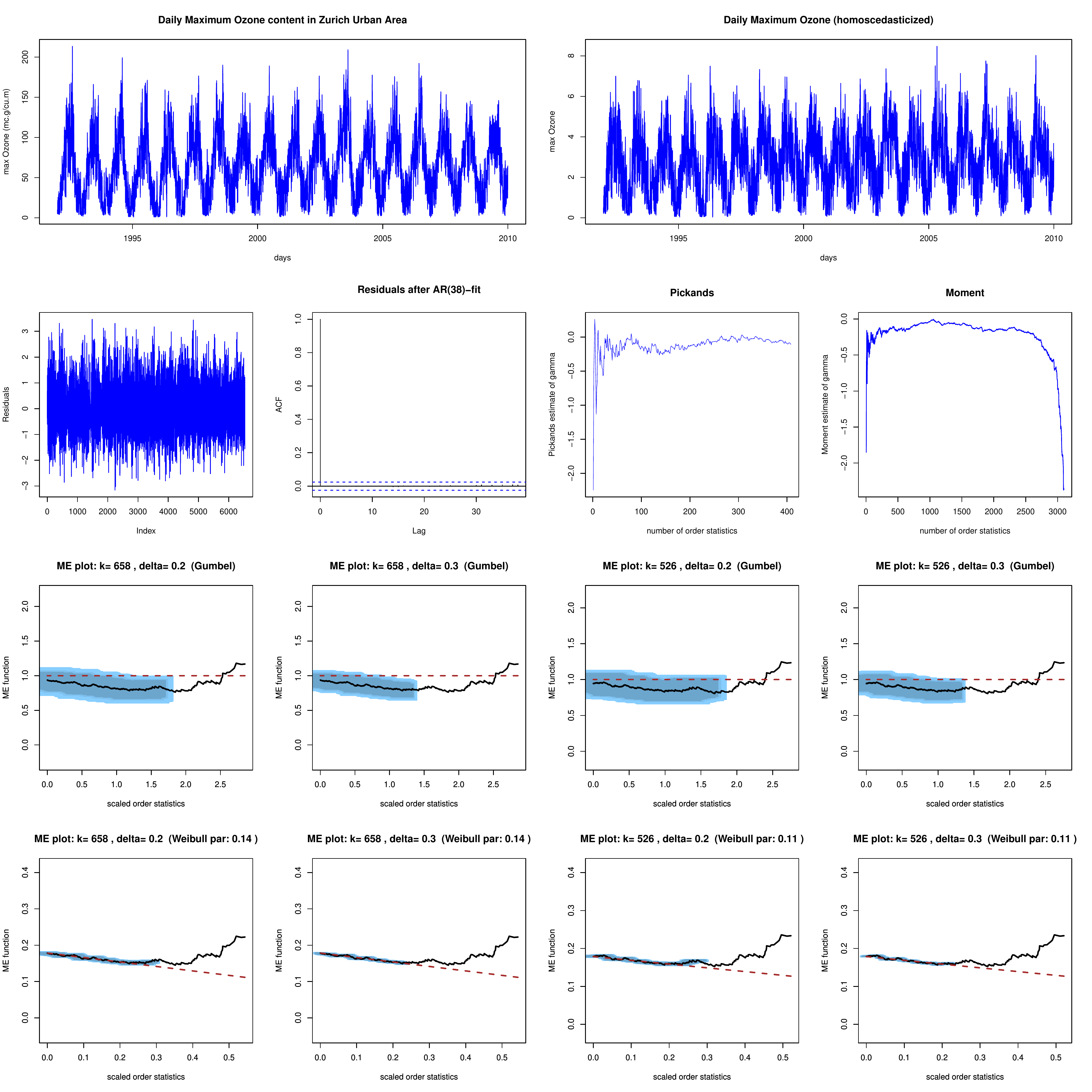}

\caption{ME Plot for Ozone concentration in Zurich urban area.}\label{fig:Zurich}
\end{figure}

  We study a data set, freely available from \url{www.eea.europa.eu}. The data set contains daily maxima of ozone concentration (in $\mu g/m^3$) from one station in Zurich, Switzerland (station code CH 0010A, Zurich-Kaserne) located $410 ~mts$ above sea-level. Data is observed from January 1, 1992 to December 31, 2009. Measurements
  were unavailable for 22 days, which we impute by the average value of ozone concentration on the same day for other available years. \\As seen in the top left plot in Figure
  \ref{fig:Zurich} the data clearly admits periodicity. {Moreover it is likely that the data is heteroscedastic. So we homoscedasticize the  data by dividing the value on each date by the standard deviation of the values on the same day over all the 18 years of data available. Since our techniques work for stationary data sets, we  fit an AR (38) process  to the data set (AR(38) is chosen by an AIC criterion) and observe (from the ACF; see second plot from the second line in Figure \ref{fig:Zurich}) that the residuals  (first plot in the second line) look independent.  Now we analyze the extremal behavior of the
residuals of the model. The Pickands and Moments estimates give a negative value but close to 0 and we can hypothesise that the sample is from a Weibull domain of attraction family. But, since the value of the parameter is close to 0 we also check whether the data is possibly from a Gumbel domain of attraction family. The confidence bounds (90\% deep blue and 95\% light blue) are created assuming $F\in D(G_0) $for $k=329$ and $ k=658$; which are 5\% and 10\% of the data set and with$\delta=0.2, 0.3$. Observe that the 90\% bounds tend to reject the hypothesis of the underlying $F\in D(G_0)$ and the 95\% do not. This is most likely a result of the parameter being close to zero.  

 On the other hand using the Pickands estimate to estimate the tail index we get $\xi=-0.14$ (for $k=658$) and 
$\xi=-0.11$ (for $k=329$) and the confidence bounds (again 90\% deep blue and 95\% light blue) for $\delta=0.2,0.3$ covers the straight line with the slopes $\xi$ quite well. Hence we are expect that the underlying distribution is in fact in a Weibull domain off attraction with parameter close to $\xi=0.1$.

\subsection{Observed data: Flow-rates at river Aare}

The other data we analyze is maximum daily flow-rate at river Aare. River Aare flows through Switzerland and some manufacturing and power plants located near the river are often concerned about flooding on the river. The data we analyse  has been collected from the Federal office of the Environment (FOEN), Bern and generously provided to us by Kernkraftwerk G\"osgen-D\"aniken.
It pertains  to daily maximum flow-rates of Aare at the measurement station Aare-Murgenthal (2063) measured in $m^3/sec$ from 1st January 1974 to 20th October, 2010. See also \url{www.hydrodaten .admin.ch/d/2063.htm}.

Note that the data admits to possibilities of measurement error since automated measurement at the specific station started only in 1993. Moreover, the control authorities aim to maintain the flow-rate of Aare at the Aare-Murgenthal (2063) station below $850 ~ m^3/sec$ and would do so by using opening or closing log-gates. This manually hinders the possibility of the data set being tuitionary. We were informed that such manual intervention has been done a couple of times.

To analyse the data, we first note the seasonality pattern in the data set; see top left plot in Figure \ref{fig:Aare}. Hence as in the previous example we fit an AR process and work with the residuals obtained after the model-fitting. Observe that the Pickands and Moment estimates both indicate towards a small but positive value of the extreme value parameter; but does not completely reject the possibility of it being zero. We again create $90\%$ (dark blue) and $95\%$ (light blue) confidence bounds under the Gumbel assumption for $k=673$ and $k=1345 $ (again $5\%$ and $10\%$) of the sample size
 and $\delta=0.1, 0.2$. The detection technique seems to reject that the underlying distribution $F\in D(G_0)$.
 
 Now we allow a Pickands estimate to chose the extreme value parameter which gives a value of around $\xi=0.16$ (for $k=673$ and $k=1345 $) and the confidence bounds seem to support that the data is from a distribution in the Fr\'echet domain of attraction.
 Thus we may conclude that flow-rate data at Murgenthal station is perhaps slightly heavy-tailed even if marginally so.

 \begin{figure}[H]
\includegraphics[width=16cm]{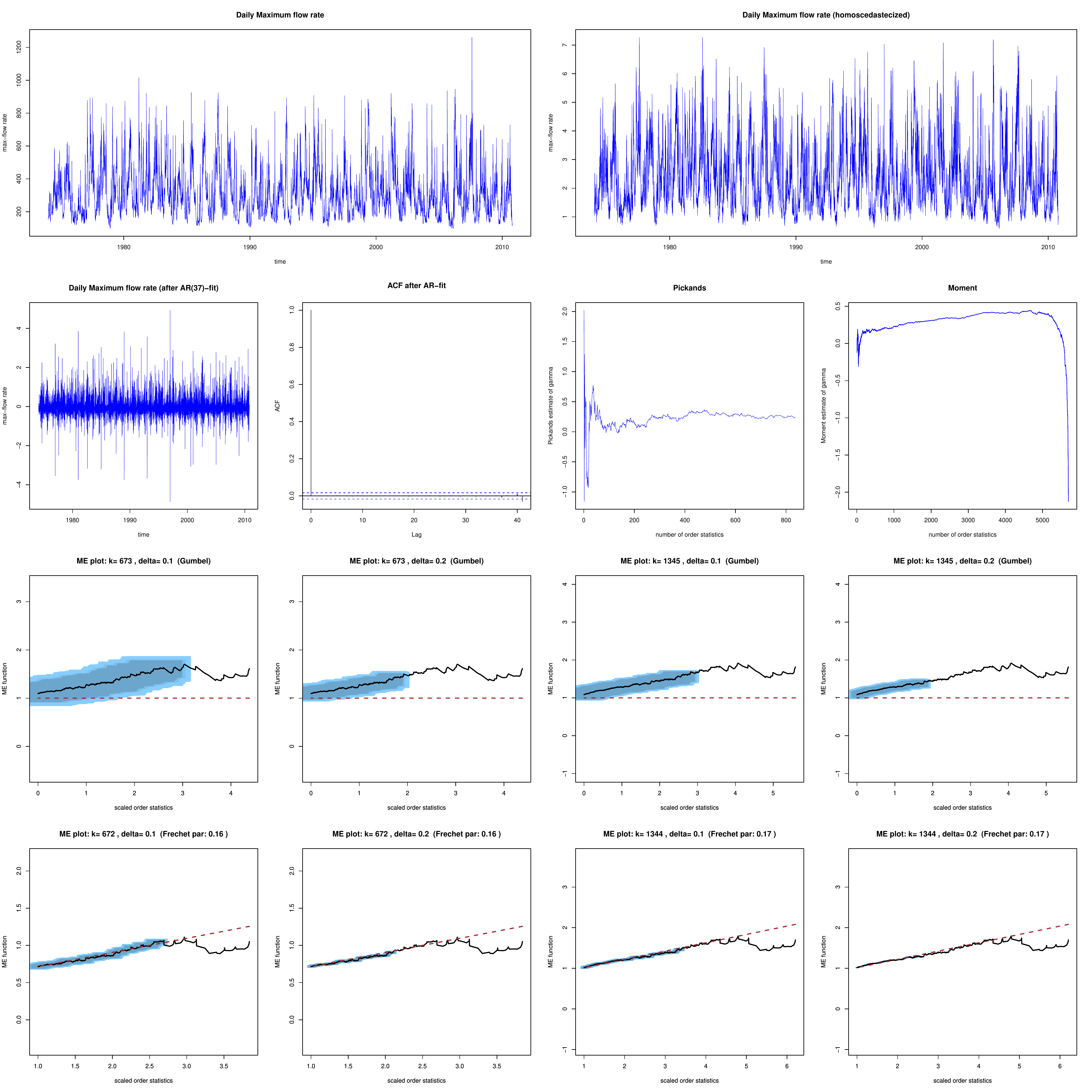}
\caption{ME Plot for Aare river flow data.}\label{fig:Aare}
\end{figure}


\bibliographystyle{imsart-nameyear}
\bibliography{bibfilenew}
\end{document}